\let\oldoverline\overline
\def\overline#1{\oldoverline{#1}\vphantom{#1}}
\newcommand{\vfc}[1]{{[}#1{]}^{vir}}
\newcommand{\action}{\:\rotatebox[origin=c]{-90}{$\circlearrowright$}\:}
\theoremstyle{definition}
\newtheorem{theorem}{Theorem}[section]
\newtheorem{lemma}[theorem]{Lemma}
\newtheorem{corollary}[theorem]{Corollary}
\newtheorem{proposition}[theorem]{Proposition}
\newtheorem{definition}[theorem]{Definition}
\newtheorem{example}[theorem]{Example}
\newtheorem{remark}[theorem]{Remark}
\newcommand{\Div}{\text{Div}}
\def\Spec{\operatorname{Spec}}
\def\rank{\operatorname{rank}}
\def\Sym#1#2{[\operatorname{Sym}^{#1}#2]}
\newcommand{\Aut}{\underline{\text{Aut}}}
\newcommand{\Supp}{\text{Supp}}
\newcommand{\PP}{\mathbb{P}}
\newcommand{\ZZ}{\mathbb{Z}}
\newcommand{\CC}{\mathbb{C}}
\newcommand{\NN}{\mathbb{N}}
\newcommand{\OO}{\mathcal{O}}
\newcommand{\Mprel}{{\mathfrak{M}}}
\newcommand{\Ms}{\overline{M}}
\newcommand{\num}[1]{{\langle #1 \rangle}}
\newcommand{\Msprels}{{\overline{\mathfrak{M}}}}
\renewcommand{\frak}[1]{\mathfrak{#1}}
\newcommand{\pb}{\ar[dr, phantom, very near start, "\ulcorner"]}
\newcommand{\lpb}{{\arrow[dr, phantom, very near start, "\ulcorner \ell"]}}
\newcommand{\Tc}{\mathscr{T}}
\newcommand{\Gt}{\mathbb{G}_m^{trop}}
\newcommand{\MprelSd}{\widetilde{\mathfrak{M}}_{0,n}(BS_d)}
\newcommand{\MsSym}{\widetilde{M}_{0, n}(\Sym{d} X)}
\newcommand{\MprelSdG}{\widetilde{\mathfrak{M}}_\Xi(BS_d)}
\newcommand{\MsSymG}{\widetilde{M}_\Xi(\Sym{d} X)}
\renewcommand{\tilde}[1]{\widetilde{#1}}
\title{Costello's pushforward formula: errata and generalization}
\author{Leo Herr, Jonathan Wise}
\date{\today}
\begin{document}

\maketitle

\begin{abstract}
Costello's pushforward formula relates virtual fundamental classes of virtually birational algebraic stacks.  Its original formulation omits a necessary hypothesis, whose addition is not sufficient to correct the proof.  We supply a substitute for Costello's notion of pure degree and prove the pushforward formula with this definition.

We also show the hypotheses of the corrected pushforward formula are satisfied in a variety of its applications. Some adjustments to the original proofs are required in several cases, including the original one.
\end{abstract}

\setcounter{section}{0}
\section{Introduction}

If $f : X' \to X$ is a proper, birational morphism of varieties, then the fundamental class of $X'$ pushes forward to the fundamental class of $X$.  Birationality can be relaxed to generic finiteness of degree $d$, in which case $f_\ast [X'] = d [X]$.  Costello's pushforward formula asserts the same holds for virtual fundamental classes in a situation that might be called ``virtual birationality'':

\begin{theorem}[Costello's Pushforward Formula] \label{thm:costello}
Suppose there is a cartesian diagram
\begin{equation} \label{eqn:square}
\begin{tikzcd}
X' \ar[r, "f"] \ar[d, "p'", swap] \pb      &X \ar[d, "p"]      \\
Y' \ar[r, "g", swap]      &Y
\end{tikzcd}
\end{equation}
such that
\begin{enumerate}
\item $X'$ and $X$ are Deligne--Mumford stacks;
\item $Y'$ and $Y$ are Artin stacks of the same pure dimension;
\item $g$ is a morphism of Deligne--Mumford type and \emph{pure degree}~$d$;
\item $f$ is proper;
\item $p$ has a perfect relative obstruction theory $E$ inducing a perfect relative obstruction theory $E'$ for $p'$ by pullback.
\end{enumerate}
Then $f_\ast [X'/Y']^{\rm vir} = d [X/Y]^{\rm vir}$.
\end{theorem}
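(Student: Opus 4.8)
The plan is to reduce the statement first to an identity of fundamental cycles of intrinsic normal cones, then to a local computation with honest normal cones, where the pure-degree hypothesis can be applied directly.

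First I would invoke the Behrend--Fantechi construction. The relative obstruction theory $E$ of hypothesis~(5) presents $[X/Y]^{\mathrm{vir}}$ as $0^!_{\mathfrak{E}}[\mathfrak{C}_{X/Y}]$, where $\mathfrak{E}$ is the vector bundle stack over $X$ associated to $E^\vee$, $\mathfrak{C}_{X/Y}$ is the intrinsic normal cone embedded in $\mathfrak{E}$, and $0^!_{\mathfrak{E}}$ is the Gysin pullback along the zero section. Since the obstruction theory for $p'$ is $E'=f^*E$, the corresponding vector bundle stack is $\mathfrak{E}'=\mathfrak{E}\times_X X'$ and $[X'/Y']^{\mathrm{vir}}=0^!_{\mathfrak{E}'}[\mathfrak{C}_{X'/Y'}]$. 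Because refined Gysin maps commute with proper pushforward (here $f$ is proper, hypothesis~(4)) and are compatible with the base change $\mathfrak{E}'\to\mathfrak{E}$, applying $f_\ast$ reduces the theorem to the cone identity
\[
\mathfrak{f}_\ast[\mathfrak{C}_{X'/Y'}] \;=\; d\,[\mathfrak{C}_{X/Y}] \quad\text{in } A_\ast(\mathfrak{E}),
\]
where $\mathfrak{f}\colon \mathfrak{N}_{X'/Y'}\to\mathfrak{N}_{X/Y}$ is the natural map of intrinsic normal sheaves. Here I use that the square is cartesian and $p'$ is the base change of $p$, so $L_{X'/Y'}=f^*L_{X/Y}$ and hence $\mathfrak{N}_{X'/Y'}=\mathfrak{N}_{X/Y}\times_X X'$; the obstruction theory drops out and everything now lives over the intrinsic normal sheaf.

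Next I would localize. The intrinsic normal cone is glued from smooth-local charts: \'etale-locally on $X$ choose a factorization $X\hookrightarrow M\to Y$ with $M$ smooth over $Y$, giving $\mathfrak{C}_{X/Y}=[C_{X/M}/T_{M/Y}]$. Base changing along $g$ (which is of Deligne--Mumford type, hypothesis~(3)) produces $X'\hookrightarrow M'=M\times_Y Y'\to Y'$ with $M'$ smooth over $Y'$, and $\mathfrak{C}_{X'/Y'}=[C_{X'/M'}/T_{M'/Y'}]$ with $T_{M'/Y'}=f^*T_{M/Y}$. Since the intrinsic normal cone is well-defined, these charts glue, so the cone identity above follows once I establish the honest-cone pushforward
\[
\big(C_{X'/M'}\to C_{X/M}\big)_\ast[C_{X'/M'}] \;=\; d\,[C_{X/M}],
\]
i.e.\ that the generically finite degree-$d$ base change $g$ multiplies the fundamental cycle of the normal cone by $d$.

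The crux, and the step I expect to be the main obstacle, is precisely this cone-level pushforward, because normal cones do \emph{not} commute with the non-flat base change $M'=M\times_Y Y'\to M$: in general one has only a closed embedding $C_{X'/M'}\hookrightarrow C_{X/M}\times_X X'$, an isomorphism over the locus where $g$ is flat. My strategy is to stratify $Y$ so that over a dense open of each component $g$ is finite flat of degree controlled by the pure-degree hypothesis; over this flat locus the two cones agree and flat base change supplies the factor $d$, while the complementary loci are lower-dimensional and, together with the extra components of $C_{X/M}\times_X X'$ not lying in $C_{X'/M'}$, contribute nothing to the top-dimensional cycle after pushforward. Since $Y$ and $Y'$ are pure-dimensional of the same dimension (hypothesis~(2)), the cone $\mathfrak{C}_{X/Y}$ is pure-dimensional and $f$ preserves this dimension, so the cycle identity is detected by multiplicities along the top-dimensional components, which the definition of pure degree must control. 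This dimension and multiplicity bookkeeping over the non-flat locus is exactly where Costello's original argument breaks down; verifying that the substitute notion of pure degree genuinely forces the degenerate contributions to vanish is the heart of the matter.
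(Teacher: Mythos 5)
Your first reduction---from the virtual class identity to the cycle identity $h_\ast[\mathfrak{C}_{X'/Y'}] = d\,[\mathfrak{C}_{X/Y}]$ inside the vector bundle stack, using properness of $f$ and the fact that the virtual class is characterized by its flat pullback to the cone---is correct and is the same reduction the paper makes. (One small inaccuracy: $\mathfrak{N}_{X'/Y'} = \mathfrak{N}_{X/Y}\times_X X'$ does not hold for non-flat $g$; as you yourself note for cones, one only gets a closed embedding. But this is harmless, since all that is needed is the map of cones into the cartesian square of vector bundle stacks.)

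The genuine gap is in the crux, and your proposed strategy for it cannot work. You plan to stratify $Y$, use that $g$ is finite flat of degree $d$ over a dense open of each component, and argue that the complementary loci are lower-dimensional and hence contribute nothing. This is structurally Costello's original (flawed) dimension-counting argument, and it is refuted by the paper's own counterexamples. The point is that $C_{X/Y}$ is supported over $X$, and the image $p(X)$ may lie \emph{entirely} inside the locus where $g$ is not flat (or not proper), so the dense open stratum of $Y$ where $g$ behaves well need never meet the geometry that determines the cone. In Example~\ref{ex:costellocounterexgmvsa1}, $g$ is finite flat of degree $2$ over a dense open of $Y=\mathbb{A}^1$, yet the cone map is an isomorphism (degree $1$); in Example~\ref{ex:costellocounterexbugeyedline}, $g$ has generic degree $1$ yet the cone map has degree $2$. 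Any argument using only generic flatness of $g$ plus dimension bookkeeping proves the false statement in both cases; tellingly, nowhere in your sketch does the actual content of Definition~\ref{def:purity}---properness of $Y'_S\to S$ for DVRs $S\to Y$ whose closed point meets the image of $X$---enter.

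The missing idea, which is how the paper proves its Proposition in Section~\ref{sec:puredeg}, is to use the deformation to the normal cone as a bridge between the generic locus of $Y$ (where ``degree $d$'' lives) and the cone over $X$ (where the multiplicities live). A generic point $\xi$ of $C_{X/Y}$ is a codimension-one point of the deformation space $M$; the normalization $R$ of $\mathcal{O}_{M,\xi}$ is a DVR whose closed point maps into $X$ and whose generic point maps to the generic point of $Y$. Purity of $g$ along $p$ then makes $M'_R = \Spec R\times_M M' \to \Spec R$ proper, absence of $t$-torsion makes it flat, and conservation of degree in a proper flat family over a DVR identifies the multiplicity of $h_\ast[C_{X'/Y'}]$ at $\xi$ (read off the special fiber) with the pure degree $d$ of $Y'$ over $Y$ (read off the generic fiber). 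Properness over the DVR is exactly what prevents components of $Y'$ from escaping in the limit (Example~\ref{ex:costellocounterexgmvsa1}) or from being counted twice (Example~\ref{ex:costellocounterexbugeyedline}); without an argument of this kind, transporting the degree from the generic fiber of $Y$ to the cone is precisely the step that fails.
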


At issue is the definition of pure degree.  Costello defines a morphism $g : Y' \to Y$ of Deligne--Mumford type to be of pure degree $d$ if both $Y'$ and $Y$ have the same pure dimension and all of the generic fibers of $Y'$ over $Y$ are finite of degree $d$.  With this definition of pure degree, Theorem~\ref{thm:costello} is false:  see Examples~\ref{ex:costellocounterexbugeyedline} and~\ref{ex:costellocounterexgmvsa1}.  However, Manolache shows that the formula is true if either $g$ is projective or $g$ is proper and $Y$ is a Deligne--Mumford stack~\cite[Proposition~5.29 and Remark~5.30]{Manolache}.

In Section~\ref{sec:puredeg} we prove that Costello's original statement is valid, provided pure degree is defined as in Definition~\ref{def:purity}.  Our definition includes all diagrams~\eqref{eqn:square} in which $g$ is proper, which is easier to verify in practice than projectivity.


We found almost twenty papers that used Costello's pushforward formula, including several by the second author. The bulk of the present paper is devoted to checking that the relevant maps are indeed proper to ensure the formula was used correctly. This list is not meant to be exhaustive, but representative of techniques used to remedy the situation.

\subsection{Acknowledgments}

We would like to thank Kevin Costello for his correspondence and blessing to write this document. The first author would also like to thank Sarah Arpin, Gebhard Martin, Fabio Bernasconi, Tommaso de Fernex, Y.P. Lee, You-Cheng Chou, and the Math Overflow community \cite{leoherrmathoverflowpost}, \cite{380120}. Both authors would like to thank Dan Abramovich for useful correspondence and encouragement. 

This work grew out of a thesis \cite{logprodfmlaherr} written at the University of Colorado Boulder by the first author and supervised by the second author. The first author was partially funded by NSF RTG grant \#1840190. The second author was supported by a Collaboration Grant from the Simons Foundation.

\section{Pure degree and the pushforward formula}
\label{sec:puredeg}

The following two examples of squares \eqref{eqn:square} show some properness assumption is necessary for the pushforward formula to hold.

\begin{example}\label{ex:costellocounterexbugeyedline}
Let $Y$ be the affine line, $Y'$ the affine line with a doubled origin, and $Y' \to Y$ the projection that is the identity on each copy. Let $X$ be the origin of $Y$. The morphisms $p$ and $p'$ are local complete intersection embeddings, so their canonical obstruction theories induce virtual fundamental classes that are the ordinary fundamental classes. The map $g$ has pure degree~$1$ but $f$ has degree~$2$.
\end{example}

\begin{example}\label{ex:costellocounterexgmvsa1}
Let $Y$ be the affine line and $X$ its origin.  Let $Y'$ be the disjoint union of $\mathbb A^1$ and $\mathbb A^1 - \{ 0 \}$.  Then $Y' \to Y$ has pure degree~$2$ but $X' \to X$ is an isomorphism, hence has pure degree~$1$.
\end{example}

On the other hand, it would be too much to insist that $g$ actually be proper.  For example, $Y'$ might have a component that is not proper over $Y$ but is sufficiently far away from the image of $p$ so as not to affect $[X/Y]^{\rm vir}$. We propose the following definition of pure degree:

\begin{definition} \label{def:purity}
We say that a Deligne--Mumford type morphism of locally noetherian Artin stacks $g : Y' \to Y$ is \emph{pure} along $p : X \to Y$ if, whenever $S$ is the spectrum of a discrete valuation ring with closed point $s$, and $f : S \to Y$ is a morphism such that $f(s)$ lies in the image of $X$, the base change $Y'_S \to S$ is proper.
\end{definition}

We elaborate on the phrase ``of degree $d$ over'' in Appendix~\ref{sec:degofgenfin}. 

\begin{remark} \label{rem:raynaud}
Definition~\ref{def:purity} is related, but not equivalent, to the definition of Raynaud and Gruson~\cite[D\'efinition~(3.3)]{RG}. The map $Y' \to Y$ in Example~\ref{ex:costellocounterexgmvsa1} fails to be pure by either definition, while Example~\ref{ex:costellocounterexgmvsa1} is pure according to \cite[D\'efinition~(3.3)]{RG} but not according to Definition~\ref{def:purity}.
\end{remark}

\begin{proposition}
With notation as in the statement of Theorem~\ref{thm:costello}, the map of relative intrinsic normal cones $C_{X'/Y'} \to C_{X/Y}$ is of degree $d$ over each generic point of $C_{X/Y}$.
\end{proposition}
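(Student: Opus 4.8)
The plan is to reduce the claim about degrees of intrinsic normal cones to the degree hypothesis on $g$, using the fact that intrinsic normal cones are compatible with the cartesian and obstruction-theoretic structure in the diagram.

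First I would recall that the relative intrinsic normal cone $C_{X/Y}$ is a cone stack over $X$, built from the relative cotangent complex of $p$, and similarly $C_{X'/Y'}$ is built from the cotangent complex of $p'$. The key input is hypothesis~(5): the perfect relative obstruction theory $E'$ for $p'$ is the pullback of $E$ under $f$. Since the diagram \eqref{eqn:square} is cartesian, there is a natural comparison $f^\ast L_{X/Y} \to L_{X'/Y'}$, and I would want to show that the induced map $C_{X'/Y'} \to f^\ast C_{X/Y} = C_{X/Y} \times_X X'$ is an isomorphism onto the fiber product, or at least generically one. This would let me replace the question about $C_{X'/Y'} \to C_{X/Y}$ with the question about the base-changed cone $C_{X/Y} \times_X X' \to C_{X/Y}$, whose degree is controlled by that of $f : X' \to X$.

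Next I would analyze the generic points of $C_{X/Y}$. Since $C_{X/Y}$ lies over $X$ and $X \to Y$, $X' \to Y'$ fit into the cartesian square, a generic point $\eta$ of $C_{X/Y}$ maps to a generic point of $X$ (at least after restricting to the relevant component), which in turn maps into $Y$. The degree of $f$ over such a point is governed, via the cartesian square and the purity/degree data, by the degree $d$ of $g$: because \eqref{eqn:square} is cartesian, the fiber of $f$ over a point of $X$ is identified with the fiber of $g$ over its image in $Y$, and the generic fiber of $g$ has degree $d$ by the definition of pure degree. The cone $C_{X'/Y'}$ being the pullback cone means its fiber over a generic point of $C_{X/Y}$ is exactly the fiber of $X'$ over the corresponding point of $X$, hence has $d$ points generically.

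The main obstacle I expect is justifying that the comparison map $C_{X'/Y'} \to C_{X/Y} \times_X X'$ is genuinely an isomorphism generically, rather than merely a map of cone stacks, since a priori the intrinsic normal cone does not commute with arbitrary base change. Here the pullback hypothesis on obstruction theories is essential: the compatibility $E' = f^\ast E$ forces the two cones to agree through their embeddings into the respective obstruction bundles, so I would argue that the subcone $C_{X'/Y'} \hookrightarrow h^1/h^0((E')^\vee)$ is the pullback of $C_{X/Y} \hookrightarrow h^1/h^0(E^\vee)$. Once this identification holds over the generic locus of $C_{X/Y}$, counting points in the fibers reduces cleanly to the degree-$d$ property of $g$ transported across the cartesian square, and the statement follows.
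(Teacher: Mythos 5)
There is a genuine gap, and it is fatal at two separate points. First, your central claim --- that compatibility of obstruction theories forces $C_{X'/Y'}$ to agree with the pullback $C_{X/Y} \times_X X'$, at least generically --- is false. The compatibility $E' = f^\ast E$ only says that both cones embed in compatible vector bundle stacks, $C_{X'/Y'} \subset f^\ast(h^1/h^0(E^\vee))$; it does not identify $C_{X'/Y'}$ with the pulled-back subcone, and in general they differ drastically. Take $Y = \mathbb{A}^2$, $Y'$ the blowup at the origin, $X$ the origin: then $X' = E \cong \mathbb{P}^1$, $C_{X/Y} \cong \mathbb{A}^2$, and $C_{X'/Y'} = N_{E/Y'} \cong \mathcal{O}_{\mathbb{P}^1}(-1)$, whereas your proposed identification would give $C_{X/Y}\times_X X' \cong \mathbb{A}^2 \times \mathbb{P}^1$, which does not even have the right dimension. (The actual cone map $\mathcal{O}(-1) \to \mathbb{A}^2$ is the blowdown, degree $1$, consistent with the proposition --- but not a pulled-back family.) The intrinsic normal cone commutes with \emph{flat} base change, and the whole point of the virtual birationality setting is that $g$ is not flat.

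Second, even granting such an identification, your counting step fails: you invoke the degree-$d$ property of $g$ at the image in $Y$ of a generic point of $X$, but that hypothesis constrains $g$ only over generic points of $Y$, and the image of $X$ in $Y$ is typically nowhere dense (in applications $p$ is essentially a closed embedding). This is exactly what the paper's counterexamples exploit: with $Y = \mathbb{A}^1$, $X$ the origin, and $Y' = \mathbb{A}^1 \sqcup (\mathbb{A}^1 \setminus \{0\})$, the map $g$ has generic degree $2$ but $C_{X'/Y'} \to C_{X/Y}$ is an isomorphism, so your argument would output the wrong answer. Symptomatically, your proof never uses the purity hypothesis of Definition~\ref{def:purity} (properness of $Y'_S \to S$ for DVRs $S$ whose closed point meets the image of $X$), and since the proposition is false without it, no argument avoiding it can be correct. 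The paper's proof is structured precisely to bring purity to bear: it runs deformation to the normal cone, localizes the deformation space $M$ at a generic point $\xi$ of $C_{X/Y}$ to produce a DVR $R$ whose closed point lands in $X$ and whose generic point lands at the generic point of $Y$, uses purity to make $M'_R \to \Spec R$ proper, torsion-freeness to make it flat, and then constancy of degree in proper flat families to transport the generic degree $d$ of $g$ (seen on the generic fiber of $R$) to the multiplicity of the cone map at $\xi$ (seen on the special fiber). That interpolation between $g$ and the cone map is the idea your proposal is missing.
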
 
\begin{proof} 
This assertion is local in $C_{X/Y}$, so it is also local in $X$ and $Y$. Replace both by smooth covers to assume $X, Y$ are affine schemes. 

The morphism $p : X \to Y$ can be factored as a closed embedding followed by a smooth morphism: $X \to \tilde Y \to Y$.  Then $C_{X/Y}$ is the stack quotient of $C_{X/\tilde Y}$ by $T_{\tilde Y/Y} \mathop\times_{\tilde Y} X$; likewise $C_{X'/Y'}$ is the quotient of $C_{X'/\tilde Y'}$ by $T_{\tilde Y/Y} \mathop\times_{\tilde Y} X'$ (where $\tilde Y'$ is the base change of $\tilde Y$ to $Y'$).  The generic fibers of $C_{X'/Y'}$ over $C_{X/Y}$ have the same degrees as the generic fibers of $C_{X'/\tilde Y'}$ over $C_{X/\tilde Y}$.  We may therefore replace $Y$ by $\tilde Y$ and assume that $p : X \to Y$ is a closed embedding.

Let $M' \to M$ be the morphism of deformations to the normal cone induced by the commutative diagram~\ref{eqn:square}.  Recall that $M$ is the complement of the strict transform of $Y \times \{ 0 \}$ in the blowup of $Y \times \mathbb A^1$ along $X \times \{ 0 \}$.  The normal cone $C = C_{X/Y}$ is the fiber of $M$ over $0 \in \mathbb A^1$.

Let $\xi$ be a generic point of $C$.  Let $R$ be the integral closure of $\mathcal O_{M,\xi}$.  Then $R$ is a $1$-dimensional, integrally closed, noetherian local ring, hence is a discrete valuation ring.  By construction, the composition of $\Spec R \to M \to Y$ sends the closed point of $\Spec R$ to $X \subset Y$ and its open point to the generic point of $Y$ as in Definition \ref{def:purity}.

By assumption, this implies $Y'_R \to \Spec R$ is proper.  Then $M' \mathop\times_M \Spec R \to \Spec R$ is also proper: the valuative criterion requires a unique lift for a commutative diagram
\begin{equation*} \xymatrix{
\Spec K' \ar[rr] \ar[d] & & M' \ar[d] \ar[r] & Y' \ar[d] \\
\Spec R' \ar[r] \ar@{-->}[urr] & \Spec R \ar[r] & M \ar[r] & Y
} \end{equation*}
after a finite extension of $R'$, but we get the lift $\Spec R' \to Y'$ by the properness of $Y'_R \to \Spec R$.  This induces a map $\Spec R' \to Y' \times \mathbb A^1$ that factors through the blowup $\overline M'$ of $Y' \times \mathbb A^1$ along $X' \times \{ 0 \}$.  This map lies in $M' \subseteq \overline M'$ because the generic point of $\Spec R'$ maps a point of $Y' \times \mathbb A^1$ over the generic point of $\mathbb A^1$.  In particular, the closed point cannot lie in the strict transform of $Y \times \{ 0 \}$, so the image of $\Spec R' \to \overline M'$ is contained in $M$.

Let us write $M'_R = \Spec R \mathop\times_M M'$ (note that the fiber product is over $M$, not over $Y$).  We have just seen that $M'_R \to \Spec R$ is proper.  We argue that it is also flat.  It suffices to show $M'_R$ is torsion free.  But under the map $\Spec R \to M \to \mathbb A^1$, a uniformizer $t$ of $\mathbb A^1$ at the origin pulls back to a nonzero element of $R$, which is a power of the maximal ideal of $R$, since $R$ is a discrete valuation ring.  By construction of $M'$, it has no $t$-torsion, so $M'_R$ must be torsion-free over $R$.

Now $M'_R$ is a proper and flat Deligne--Mumford stack over $\Spec R$.  It remains only to show that the fibers of $M'_R$ have the same degree over $\Spec R$.  We can replace $R$ with a flat cover, so we assume $R$ is complete.  Let $U \to M'_R$ be an \'etale cover with $U$ affine.  Then $U$ is~$1$-dimensional, flat, of finite type over $\Spec R$.  Therefore it is quasifinite over $R$.  Since $R$ is complete, $U = U_0 \sqcup V$ where $U_0$ is finite over $R$ and the closed fiber of $V$ is empty.  We can replace $U$ by $U_0$ and then $U \to M'_R$ and $U \to \Spec R$ are both finite and flat.  Note $U \to M'_R$ is finite because $M'_R$ is proper over $\Spec R$.

Assume without loss of generality that $M'_R$ is connected.  If $d$ is the degree of $M'_R$ over the generic fiber of $\Spec R$ then $d = \rank_R \mathcal O_U / \rank_{\mathcal O_{M'_R}} \mathcal O_U$, which is the same whether evaluated at the generic or the special point of $\Spec R$.  On the special fiber, this ratio is the multiplicity of the pushforward of $C_{X'/Y'}$ at the point $\xi$. On the generic fiber, it is the pure degree of $Y'$ over $Y$, as required.
\end{proof}

\begin{proof}[Proof of Theorem~\ref{thm:costello}]
Let $E^\vee$ and ${E'}^\vee$ denote the vector bundle stacks dual to the obstruction theories $E$ and $E'$.  As relative obstruction theories, there are closed embeddings $C_{X/Y} \subset E^\vee$ and $C_{X'/Y'} \subset {E'}^\vee$. Their compatibility entails a commutative diagram whose lower square is cartesian:
\begin{equation*}
\begin{tikzcd}
C_{X'/Y'} \ar[r] \ar[d] & C_{X/Y} \ar[d] \\
{E'}^\vee \ar[r, "h"] \ar[d,"q'", swap] \pb & E^\vee \ar[d, "q"] \\
X' \ar[r, "f"] & X
\end{tikzcd}
\end{equation*}
By compatibility of proper pushforward and flat pullback, we have
\begin{equation}
q^\ast f_\ast [X'/Y']^{\rm vir} = h_\ast {q'}^\ast [X'/Y']^{\rm vir} = h_\ast [C_{X'/Y'}] = d [C_{X/Y}] = q^\ast\bigl( d [X/Y]^{\rm vir} \bigr)
\end{equation}
But $[X/Y]^{\rm vir}$ is the unique cycle class on $X$ such that $q^\ast [X/Y]^{\rm vir} = [C_{X/Y}]$, so we conclude that $f_\ast [X'/Y']^{\rm vir} = [X/Y]^{\rm vir}$, as required.  
\end{proof}

\begin{remark}\label{rmk:puredegprops}
We record some consequences of Definition \ref{def:purity}. 
\begin{itemize}
    \item If a map $Y' \to Y$ is proper, it is pure along any morphism $X \to Y$. 
    
    \item If $\widetilde{X} \to X$ is surjective and $Y' \to Y$ is pure along $\widetilde{X} \to X \to Y$, then it's also pure along $X \to Y$. 
    
    \item If $Y' \to Y$ is of pure degree along a map $X \to Y$, then it is of pure degree along any map $Z \to X \to Y$. 
    
    \item Suppose in Diagram \eqref{eqn:square} that $p, p'$ are open immersions. If $f$ is proper, then $g: Y' \to Y$ is of pure degree along $p$ for topological reasons. 
    
    \item Purity is stable under base change in $Y$. 
    
	\item Purity of $g$ in diagram~\eqref{eqn:square} implies properness of $f$, so the assumption that $f$ be proper is redundant.
\end{itemize}
\end{remark}

\begin{remark}
In this paper, we verify the hypotheses of Theorem~\ref{thm:costello} by showing $g$ is proper.  We did find applications of Costello's theorem in the literature where $g$ was pure but not proper, but we found it easier to replace $Y'$ by a smaller stack that was proper than to verify purity directly.
\end{remark}

\section{Higher genus stable maps and genus zero orbifold stable maps}
\label{sec:highergenus}

Let $X$ be a smooth, projective scheme and work over $\Spec \CC$. The original application of the pushforward formula was to the following fiber square in the proof of~\cite[Lemma~8.0.2]{Costello}: 

\begin{equation}\label{eqn:dfoldcovercostellosquare}
\begin{tikzcd}
\overline{M}_\eta(X) \arrow[r] \arrow[d] \arrow[dr, phantom, very near start, "\ulcorner"]        &\overline{M}_v(X) \arrow[d]      \\
\mathfrak{M}_\eta \arrow[r]       &\mathfrak{M}_v.
\end{tikzcd}
\end{equation}
The stack $\mathfrak{M}_\eta$ parametrizes finite, \'etale, $d$-sheeted covers $C' \rightarrow C$ with fixed numerical data, and the horizontal arrow sends such a cover to its source curve $C'$. The stack $\overline{M}_\eta(X)$ is defined to make this square cartesian~\cite[pp.\ 575, 591, 593]{Costello}.\footnote{In the fiber products on pp.\ 575 and 591, $\overline{\mathcal M}_{s(\eta)}$ and $\overline{\mathcal M}_{r(s(\eta))}$ were presumably meant to be $\overline{\mathcal M}_{s(\eta)}(X)$ and $\overline{\mathcal M}_{r(s(\eta))}(X)$, respectively.}

The next example illustrates that the horizontal arrows in Diagram~\ref{eqn:dfoldcovercostellosquare} are not proper, and therefore that $\overline{M}_\eta(X)$ is not proper.  Since stable maps to $\Sym d X$ do form a proper Deligne--Mumford stack, $\overline M_\eta(X)$ cannot be one of its components, as claimed in \cite[Lemma~2.4.2]{Costello}.  The pushforward formula cannot be applied to $\overline M_\eta(X)$ because even its statement requires proper pushforward along the upper horizontal arrow $\overline M_\eta(X) \to \overline M_v(X)$.

\begin{example}
Let $X = \mathbb{P}^2$ and $U = \mathbb{A}^1_{\lambda} \setminus \{ 0 \}$. Consider the family of plane cubics $C'_\lambda \subseteq X$ indexed by $\lambda \in U$ given by the projective closure of
\[y^2 = x(x + \lambda)(x+1).\]

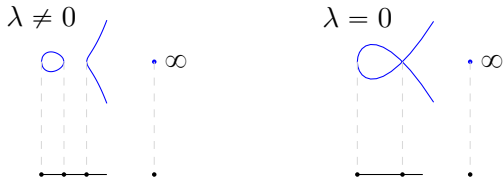
\begin{figure}
    \centering
    \begin{tikzpicture}[scale=.6]
	\begin{scope}[shift={(7,0)}]
	\node at (-1, 1){$\lambda = 0$};
	\draw[variable = \x, domain=-1.3:1.3, blue] plot (\x*\x-1, \x*\x*\x-\x); 
	\fill[blue] (1.5, 0) circle (.05);
	\node[right] at (1.5, 0){{\small $\infty$}};
	\draw (-1, -2.5)--(.45, -2.5);
	\foreach \t in {-1, 0, 1.5} {
	\draw[dashed, color=gray!30] (\t, 0) -- (\t, -2.5);
	\fill (\t, -2.5) circle (.05);}
	\end{scope}
	\node at (-1, 1){$\lambda \neq 0$};
	\draw[variable = \x, domain=0:.45, blue] plot (\x, sqrt{\x*(\x+.5)*(\x+1)}); 
	\draw[variable = \x, domain=-1:-.5, blue] plot (\x, sqrt{\x*(\x+.5)*(\x+1)}); 
	\draw[variable = \x, domain=0:.45, blue] plot (\x, -sqrt{\x*(\x+.5)*(\x+1)}); 
	\draw[variable = \x, domain=-1:-.5, blue] plot (\x, -sqrt{\x*(\x+.5)*(\x+1)}); 
	\fill[blue] (1.5, 0) circle (.05);
	\node[right] at (1.5, 0){{\small $\infty$}};
	\draw (-1, -2.5)--(.45, -2.5);
	\foreach \t in {-1, -.5, 0, 1.5} {
	\draw[dashed, color=gray!30] (\t, 0) -- (\t, -2.5);
	\fill (\t, -2.5) circle (.05);}
\end{tikzpicture}
    \caption{A family of maps to $\PP^1$ with general fiber a ramified cover and special fiber that is not. }
    \label{fig:degeneratingcubic}
\end{figure}

We will describe a modification of $\frak M_\eta$ that makes the horizontal arrows proper and revives \cite[Lemma~2.4.2]{Costello}. Assume the graph $\eta$ has a single vertex and eliminate the graphs from the notation for simplicity.  We leave it to the reader to adapt the method to more complicated graphs and deduce Costello's main theorem in its original form.

The closure of the projection $[x:y:z] \mapsto [x:z]$ gives a map of curves $C'_\lambda \to \mathbb{P}^1 \times U$ over $U$. Mark the four sections of $\mathbb{P}^1 \times U$ given by the branch locus and endow them with $B \mathbb{Z}/2$-stack structure. This yields a family of stacky projective lines over $U$ which we call $C_\lambda$. The map $C'_\lambda \to C_\lambda$ is a proper, \'etale, 2-sheeted cover of stacky curves. Degenerating the source to $\lambda = 0$, we see there's no way to add stack structure to the base $\mathbb{P}^1$ to complete this family to an \'etale $\mathbb{Z}/2$-torsor. 

The map $C'_\lambda \to C_\lambda$ is classified by a map $C_\lambda \to BS_2$, which is classified in turn by a map $U \to \Ms_{0, 4}(BS_2)$. We will instead take the limit in the sense of twisted stable maps (equivalently, admissible covers), which allows $C_\lambda$ to degenerate into two copies of $\mathbb P^1$ joined at a node.

\end{example}

Write $S_d$ for the symmetric group on $d$ letters and $\num{d} := \{1, 2, \dots, d\}$. Our solution is to use twisted stable maps to the stack $\Sym{d}X = [X^d/S_d]$. A map $C \to \Sym{d} X$ may be interpreted equivalently as a torsor $S_d \action P \to C$ with an equivariant map $P \to X^d$ or a $d$-sheeted finite \'etale cover $C' \to C$ given by $C' = \num{d} \times^{S_d} P$ with a map $C' \to X$ \cite[Lemma 2.2.1]{Costello}. We will apply Costello's pushforward formula to the cartesian diagram of moduli stacks of twisted curves after comparing conventions for torsors and twisted stable maps.

The map $S_{d-1} \to S_d$ including those permutations which fix the last element induces a group action $S_{d-1} \action S_d$. Consider the $d$-element set $\num{d} \simeq S_{d-1} \backslash S_d$ as a set-theoretic quotient with \emph{right}-action by $S_d$. We may view $BS_{d-1} \simeq \num{d}/S_d$ similarly. This latter identification does not depend on which $(d-1)$-element subset $S_{d-1}$ is allowed to act.

\begin{lemma}
If $T \to BS_d$ classifies an $S_d$-torsor $P \to T$, then the contracted product and fiber product are the same: 
\[\num{d} \operatorname\times^{S_d} P \simeq T \times_{BS_d} BS_{d-1}.\]

\end{lemma}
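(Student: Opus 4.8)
The plan is to recognize the fiber product $T\times_{BS_d}BS_{d-1}$ as the associated bundle $\num{d}\times^{S_d}P$. I would begin from the identifications recorded just above: $BS_{d-1}\simeq\num{d}/S_d=[\num{d}/S_d]$, under which the structure map $BS_{d-1}\to BS_d$ becomes the projection $[\num{d}/S_d]\to[\ast/S_d]=BS_d$ induced by $\num{d}\to\ast$. (That the induction map $Q\mapsto Q\times^{S_{d-1}}S_d$ matches this projection is exactly the statement that forgetting the equivariant map $\sigma$ below recovers the underlying $S_d$-torsor.) Thus it suffices to produce a canonical isomorphism $T\times_{BS_d}[\num{d}/S_d]\simeq\num{d}\times^{S_d}P$ over $T$, where $P$ is the right $S_d$-torsor classified by $T\to BS_d$ and $\num{d}\times^{S_d}P$ is representable because $S_d$ acts freely on $\num{d}\times P$ (freely already on $P$).

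The cleanest verification is on the functor of points. For a test scheme $U$, a $U$-point of $T\times_{BS_d}BS_{d-1}$ is a map $t:U\to T$ together with a lift of the classifying map of the $S_d$-torsor $P_U:=t^*P$ along $[\num{d}/S_d]\to BS_d$; by the description of $[\num{d}/S_d]$, such a lift is exactly an $S_d$-equivariant map $\sigma:P_U\to\num{d}=S_{d-1}\backslash S_d$. Equivariant maps $P_U\to\num{d}$ are in canonical bijection with sections over $U$ of the associated bundle $\num{d}\times^{S_d}P\to T$ lying over $t$, and these sections are precisely the $U$-points of $\num{d}\times^{S_d}P$ viewed as a $T$-scheme. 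This gives the isomorphism of functors, hence of stacks.

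Conceptually, the same fact expresses that forming the associated $\num{d}$-bundle commutes with base change: $[\num{d}/S_d]\to BS_d$ is the associated $\num{d}$-bundle of the universal torsor, and pulling back along $T\to BS_d$ replaces the universal torsor by $P$. One can make this precise by descent along the fppf (indeed finite \'etale) cover $P\to T$: pulling both sides back to $P$ trivializes $P$ and identifies each with $\num{d}\times P$ compatibly with the descent datum on $P\times_T P\cong S_d\times P$, so the two objects over $T$ agree. The one point demanding care — and the only real obstacle — is the bookkeeping of left versus right actions: one must fix, as in the text, that $S_d$ acts on $\num{d}=S_{d-1}\backslash S_d$ on the right with $S_{d-1}$ the stabilizer of the last element, and check that under $BS_{d-1}\simeq[\num{d}/S_d]$ the map to $BS_d$ is the untwisted projection rather than a version twisted by an automorphism of $S_d$. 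With these conventions the reduction of structure group of $P_U$ to $S_{d-1}$ matches the equivariant map to $\num{d}$ exactly, and the contracted product $\num{d}\times^{S_d}P$ is literally the associated bundle whose sections appear above.
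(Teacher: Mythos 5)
Your proof is correct, and it overlaps substantially with the paper's, with one genuinely different ingredient. The paper's proof (given in two variants in the source) never computes a functor of points. One variant pulls the tower $\ast \to BS_{d-1} \to BS_d$ back along $T \to BS_d$, obtaining $P \to T' \to T$ with $T' = T \times_{BS_d} BS_{d-1}$; since $\ast \to BS_{d-1}$ is an $S_{d-1}$-torsor, so is $P \to T'$, whence $T' \simeq P/S_{d-1}$, and the quotient map $S_d \to S_{d-1}\backslash S_d = \num{d}$ exhibits $\num{d}\times^{S_d}P$ as that same quotient $P/S_{d-1}$. The other variant proves $\num{d}\times^{S_d}P \simeq P/S_{d-1}$ by hand and then remarks that the lemma is the universal case: $BS_{d-1} \to BS_d$ is itself the associated $\num{d}$-cover of the universal torsor over $BS_d$, and contracted products commute with base change --- which is precisely your closing ``conceptual'' paragraph. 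So your final paragraph \emph{is} the paper's argument, while your main argument (lifts along $[\num{d}/S_d] \to BS_d$ correspond to equivariant maps $P_U \to \num{d}$, which correspond to sections of the associated bundle) is a more explicit, pointwise verification that the paper skips entirely. Your route buys a visibly functorial isomorphism, settles representability of the contracted product, and forces the left/right action bookkeeping into the open; the paper's route is shorter and lands directly on the intermediate description $T \times_{BS_d} BS_{d-1} \simeq P/S_{d-1}$, which is the form used immediately after the lemma, where the cover of $\Sym{d} X$ associated to the torsor $X^d$ is written as $[X^d/S_{d-1}]$.
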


Applying this lemma to the torsor $X^d \to \Sym{d} X$, the associated $d$-sheeted cover is
\[X \times \Sym{d-1} X \simeq [ X^d/S_{d-1} ] \simeq \Sym{d} X \times_{BS_d} BS_{d-1}.\]
This comes with a canonical projection to $X$.

\begin{remark}
Our description of the equivalence between the categories of $S_d$-torsors and $d$-sheeted covers over $T$ is the opposite of \cite[Lemma 2.2.1]{Costello}.
\end{remark}

\begin{remark}\label{rmk:trivialcostellogerbes}
A stack $\mathcal K_{0, n}(V)$ of twisted stable maps to a smooth projective target $V$ was defined in \cite{abramcortivistoli} that allows marked points to have nontrivial gerbe structure. We instead use the stack $\Ms^{tw}_{0, n}(V)$ that requires those gerbes to be trivialized by sections at each marked point, as in \cite[\S 2]{Costello}. Our marked points are \emph{globally} of the form $B \mu_r$ for some ``ramification order'' $r \in \ZZ_{\geq 1}$. We demand similarly that the gerbes of relative twisted stable maps $\Ms_{0, n}^{tw}(V/W)$ \cite[\S 8.3]{twistedstablemaps} for a map $V \to W$ be trivialized. 

The map $\Ms^{tw}_{0, n}(V) \to \mathcal K_{0, n} (V)$ is the universal gerbe, of degree $\dfrac{1}{r_1 \cdot r_2 \cdot \cdots r_n}$ over the locus where the gerbes are $B\mu_{r_i}$. 
\end{remark}

\begin{remark}\label{rmk:twistedstabmapsfixramification}

A $d$-fold cover of orbifold curves $C' \to C$ over $S$ has discrete invariants including the genera, the stack structures at marked points, and the maps $B \mu_{r_i} \to BS_d$ from each $i$th marked point of $C$ encoding its fiber in $C' \to C$. All are locally constant in $S$. If the fiber over the $i$th marked point is denoted $\num{\ell_i}$, there is a function $\tau : \num{\ell_i} \to \ZZ_{\geq 1}$ sending each point to its ramification order.  The function depends in a locally constant fashion on the cover $C' \to C$.

Let $\Xi$ be a monodromy profile, specified by $n$ maps $B \mu_{r_i} \to BS_d$ parametrizing the fiber over each marked point as a $d$-sheeted cover of $B\mu_{r_i}$ (specifying, in other words, the monodromy of the cover around the $i$th parked point of the base). The substack $\Ms_\Xi^{tw}(\Sym d X) \subseteq \Ms_{0, n}^{tw}(\Sym d X)$ of stable maps from covers $C' \to C$ with monodromy profile $\Xi$ is open and closed. The fiber over the $i$th marked point consists of $\ell_i$ ramified points. Choosing an ordering of each fiber among $S_{\ell_i}$ choices makes the source $C'$ into a $\ell = \sum \ell_i$-marked curve. There is an open and closed substack inside $\Ms^{tw}_\Xi(\Sym d X)$ that also fixes $\tau$. If $\tau$ is increasing for example, $r_i > r_j$ implies $i > j$ and ramified points come later in the ordering.

The monodromy profile $\Xi$ is a component of the cyclotomic intertia stack $I_\mu(BS_d)$ \cite[Definition 3.2.1]{GWthyofstacksAbramGraberVistoli}, recipient of evaluation maps 
\[\Ms_{0, n}^{tw}(\Sym d X) \to (I_\mu(\Sym d X))^n \to (I_\mu(BS_d))^n.\]

\end{remark}

\subsection{Applying the pushforward formula to the new diagram}

Fix nonnegative integers $g$, $n$, $d$, $\ell = \sum \ell_i$ with $n \leq \ell \leq dn$ and monodromy profile $\Xi$ as in Remark \ref{rmk:twistedstabmapsfixramification}. We are ready to reinterpret \eqref{eqn:dfoldcovercostellosquare}:
\begin{equation}\label{eqn:pbmodcurvestacks}
\begin{tikzcd}
\tilde M_\Xi(\Sym d X) \ar[r, "q"] \ar[d, "\pi'"] \ar[dr, phantom, very near start, "\ulcorner"]     &\Ms_{g, \ell}(X) \ar[d, "\pi"]     \\
\tilde {\frak M}_{\Xi}(BS_d) \ar[r, "p"]      &\Mprel_{g, \ell}.
\end{tikzcd}\end{equation}

The stacks $\Ms_{g, \ell}(X), \Mprel_{g, \ell}$ parametrize ordinary stable maps to $X$ and prestable curves with $\ell$ marked points, while the map $\pi$ forgets all but the source curve of the stable map. We now introduce $p$ in three steps. 

\textit{Step 1: Relative Maps}

Let $u : \mathfrak D \to \Mprel_{g, \ell}$ be the universal curve and $\Ms^{tw}_{0, n}(u) = \Ms^{tw}_{0, n}(\Sym d {\mathfrak D} / \Mprel_{g, \ell})$ be the stack of relative twisted stable maps with trivialized marked gerbes, as described  in Remark \ref{rmk:trivialcostellogerbes}. If $S \to \Mprel_{g, \ell}$ classifies a curve $D \to S$, points of this stack are given by:
\[\begin{tikzcd}
        &\Ms^{tw}_{0, n}(u) \ar[d]         \\
S \ar[r, "D", swap] \ar[ur, dashed]       &\Mprel_{g, \ell}
\end{tikzcd}    :=
\left\{\begin{tikzcd}[column sep=small]
C \ar[rr, "\widehat{f}"] \ar[dr]      &    &\Sym d D \ar[dl]      \\
    &S        &
\end{tikzcd}\middle|\quad \parbox{2.5in}{$C$ is a connected, nodal orbifold curve \\ with trivialized marked gerbes, and \\ $\widehat{f}$ is representable and stable} \right\}.\]
Maps $C \to \Sym d D$ are equivalent to $d$-sheeted finite \'etale covers $C' \to C$ of twisted curves with a representable map $C' \to D \times \Sym {d-1} D$. Stability requires that the sheaf of automorphisms of the trio $(C \leftarrow C' \to D)$ that restrict to the identity on $D$ be finite at geometric points. 

\textit{Step 2: Marked Points}

Endow $C'$ with the marked points pulled back from those of $C$. These preimages are \emph{unordered}, so $C'$ is not yet a marked curve. 

If $C' \to C$ were an \emph{untwisted} finite \'etale cover, order the $d$ preimages of each marked point of $C$. Then $\ell_i = d$, $\ell = dn$, and ordering amounts to a $(S_d)^n$-torsor on moduli spaces in the untwisted case. Globally fixing a lexicographic ordering of $\num{n} \times \num{d}$ then equates a $\num{n} \times \num{d}$-marked curve with a $dn$-marked curve.

For twisted/ramified covers $C' \to C$, the sizes $\ell_i$ of the fibers over the marked points vary according to $\Xi$. Ordering them nevertheless entails a $\prod S_{\ell_i}$-torsor $\widetilde{M}_{0, n}(u)$ over $\Ms_{0, n}^{tw}(u)$, as in Remark \ref{rmk:twistedstabmapsfixramification}.

Now $\widetilde{M}_{0, n}(u)$ parametrizes trios of curves $(C \leftarrow C' \to D)$ where $C'$ is a marked curve, but $C' \to D$ needn't be a map of marked curves. This condition cuts out a closed substack $\widetilde{M}'_{0, n}(u) \subseteq \widetilde{M}_{0, n}(u)$ where $C' \to D$ maps marked points to marked points in order.

\textit{Step 3: Partial Stabilization}

Think of the map $f : C' \to D$ as a prestable twisted map to $D$ and take its stabilization $\overline{f} : \overline{C}' \to D$. If $\overline{f}$ identifies $D$ with the coarse moduli space of $\overline{C}'$ then $f$ is called a \textit{partial stabilization}. Define 
\[\MprelSd \subseteq \widetilde{M}'_{0, n}(u)\]
to be the substack where $f$ is a partial stabilization. This substack is open and closed:

\begin{lemma} \label{lem:open-and-closed} \label{lem:partstabnclopen}
Let $f : C' \to D$ be a morphism of untwisted prestable curves over a base $S$.  The locus in $S$ where $f$ is a partial stabilization is both open and closed.
\end{lemma}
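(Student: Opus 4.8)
The plan is to reduce the statement to a question about when a single morphism of families of prestable curves is a fiberwise isomorphism, and then to treat openness and closedness by separate arguments. The conditions to be checked are local on $S$, so I would first reduce to $S$ noetherian. Next I would use that the stabilization of a prestable map commutes with base change: stabilizing $C'$ relative to $f$ produces a family $\overline{C}' \to S$ of prestable curves together with an $S$-morphism $\overline{f}\colon \overline{C}' \to D$ whose fiber over $s$ is the stabilization of $f_s$. Because $D$ is untwisted, its coarse space is itself, so $f$ is a partial stabilization over $s$ precisely when $\overline{f}_s$ is an isomorphism. Thus the lemma becomes the assertion that $\{\, s \in S : \overline{f}_s \text{ is an isomorphism} \,\}$ is open and closed.

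For openness I would invoke the standard fact that being an isomorphism is an open condition. Let $U \subseteq D$ be the (open) locus over which $\overline{f}$ is an isomorphism, so $D \setminus U$ is closed; since $\overline{C}'$ and $D$ are proper over $S$, the image of $D \setminus U$ in $S$ is closed, and its open complement is exactly the set of $s$ with $D_s \subseteq U$. Here one uses that a flat morphism which is a fiberwise isomorphism is an isomorphism in a neighborhood of the fiber, so that $\overline{f}_s$ being an isomorphism is equivalent to $D_s \subseteq U$. This simultaneously shows the good locus is stable under generization.

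Closedness is the crux, and I would prove it by the valuative criterion. Let $S = \Spec R$ with $R$ a discrete valuation ring, generic point $\eta$ and closed point $0$, and assume $\overline{f}_\eta$ is an isomorphism; the goal is that $\overline{f}_0$ is an isomorphism. Since $\overline{f}$ is an isomorphism over $\eta$ it is proper, surjective, and birational, and flatness gives the genus comparison $p_a(\overline{C}'_0) = p_a(\overline{C}'_\eta) = p_a(D_\eta) = p_a(D_0)$. The key point is that $\overline{f}_0$ contracts no component of $\overline{C}'_0$. Suppose it contracted a connected vertical subcurve $Z$, meeting the rest of $\overline{C}'_0$ in $n$ nodes, all mapping to a single point $p \in D_0$. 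Nodality of $D_0$ forces $n \le 2$; preservation of $p_a$ forces $Z$ to be a tree of rational curves. But a rational tree attached at only one or two external points necessarily contains a leaf (or bridge) carrying fewer than three special points, which is $f$-unstable, contradicting that $\overline{C}'$ is the stabilization and hence fiberwise $f$-stable. When markings are present, the alternative is that the extra special points are marked points, but then these would collide on $D_0$ or land on a node, contradicting that $D_0$ is prestable with distinct smooth markings. Hence $\overline{f}_0$ contracts nothing, so it is finite and birational; a finite birational morphism exhibits $D_0$ as a pinching of $\overline{C}'_0$, which can only increase arithmetic genus, so $p_a(\overline{C}'_0) = p_a(D_0)$ forces $\overline{f}_0$ to be an isomorphism.

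The main obstacle is the contraction analysis in the closedness step: ruling out every contracted vertical subcurve requires balancing the genus bookkeeping (which confines such a subcurve to a rational tree), the nodality of $D_0$ (which bounds the number of attaching nodes by two and excludes higher-valence contractions), and above all the minimality of the stabilized source $\overline{C}'$ (which excludes the low-valence rational tails and bridges that would otherwise arise). Once no contraction is possible, the genus comparison closes the argument at once.
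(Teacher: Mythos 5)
Your openness argument is essentially the paper's: the nontrivial input (a map out of a family that is proper and \emph{flat over $S$}, which is an isomorphism on one fiber, is an isomorphism over a neighborhood of that fiber) is exactly what the paper cites from the Stacks Project, Tag 05XD. Note the flatness that makes this true is flatness of $\overline{C}'$ over $S$, not flatness of $\overline{f}$, which fails wherever $\overline{f}$ contracts something. For closedness, however, you take a genuinely different route. The paper disposes of it in one line: over a discrete valuation ring, $\overline{f}\colon \overline{C}' \to D$ and $\mathrm{id}\colon D \to D$ are two families of stable maps to $D$ that are isomorphic over the generic point, so uniqueness of stable limits (separatedness of the stack of stable maps) forces them to agree over the closed point. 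You instead set out to reprove this instance of separatedness by hand, via genus bookkeeping and an analysis of contracted subcurves.

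That is where the gap lies, and it sits precisely at the point where the paper's soft citation does real work. Your two pivotal claims --- that nodality of $D_0$ forces $n \le 2$, and at the end that $\overline{f}_0$ ``contracts nothing, so it is finite and birational'' --- both presuppose that $\overline{f}_0$, restricted to the non-contracted components, is \emph{birational} onto $D_0$: degree one on each component, distinct components to distinct components, and no identification of branches beyond what is forced. Nodality of $D_0$ gives none of this. A priori, the special fiber of a generically-isomorphic map could identify two smooth points of the source with the node $p$ (so that three or more attaching nodes of $Z$ hit $p$ while landing on only two branches of $D_0$), or could map a component with degree two. Ruling this out is the actual crux and requires the flatness of both families over the DVR in an essential way: for instance, one checks that $D$ is regular at the generic points of $D_0$, uses Stein factorization plus a genus/flatness argument to show $\overline{f}_*\mathcal{O}_{\overline{C}'} = \mathcal{O}_D$ (a finite birational $D' \to D$ that is an isomorphism over $\eta$ would change $\chi(\mathcal{O})$ of the closed fiber, contradicting flatness), and only then obtains connected fibers and birationality of the non-contracted part. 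The same input is hidden in your genus step: the correct bookkeeping is $p_a(Z) = \delta - n + 1$, where $\delta$ is the local genus drop of the non-contracted part mapping onto $D_0$ at $p$, so ``$Z$ is a rational tree'' needs control of $\delta$, not just conservation of $p_a$. Finally, your marking argument (and the lemma itself) silently assumes that $f$ carries markings of $C'$ to markings of $D$: without this, blow up a point of the special fiber of a constant family and put two markings on the exceptional curve; the resulting map is already stable, is an isomorphism over $\eta$, and contracts a curve at $0$, so the partial-stabilization locus is open but not closed. The paper's application supplies this hypothesis (markings map to markings in order), and its one-line proof uses it too, but in a self-contained argument like yours it must be stated.
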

\begin{proof}

The stabilization $\overline{f} : \overline{C}' \to D$ is an isomorphism over an open locus \cite[\href{https://stacks.math.columbia.edu/tag/05XD}{Tag 05XD}]{stacks-project}. This locus is also stable under specialization by the uniqueness of stable limits of stable maps.
\end{proof}

The map $p$ in Diagram~\eqref{eqn:pbmodcurvestacks} is the composite of all the proper maps above:
\[p : \MprelSd \subseteq \widetilde{M}'_{0, n}(u) \subseteq \widetilde{M}_{0, n}(u) \to \Ms^{tw}_{0, n}(u) \to \Mprel_{g, \ell}.\]

\begin{corollary}\label{cor:costellopisproper}
The map $p : \MprelSd \to \Mprel_{g, \ell}$ is proper, thus pure.
\end{corollary}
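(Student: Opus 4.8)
The plan is to establish properness of $p$ by factoring it as in the displayed composite and checking that each factor is proper; since properness is stable under composition, this suffices. Because the discrete invariants $g$, $n$, $d$, $\ell$ and the monodromy profile $\Xi$ are all fixed, every stack occurring is of finite type over $\Spec\CC$, so the valuative criterion over discrete valuation rings is available throughout.

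First I would dispose of the maps built by hand. The inclusion $\widetilde{M}'_{0,n}(u) \subseteq \widetilde{M}_{0,n}(u)$ is a closed immersion---it is cut out by the condition that $C' \to D$ send marked points to marked points in order---and closed immersions are proper. The inclusion $\MprelSd \subseteq \widetilde{M}'_{0,n}(u)$ is open and closed by Lemma~\ref{lem:partstabnclopen}, hence in particular a closed immersion, hence proper. Finally, $\widetilde{M}_{0,n}(u) \to \Ms^{tw}_{0,n}(u)$ is a torsor under the finite group $\prod_i S_{\ell_i}$, so it is finite \'etale, in particular proper.

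The heart of the matter is the properness of $\Ms^{tw}_{0,n}(u) = \Ms^{tw}_{0,n}(\Sym d {\mathfrak D}/\Mprel_{g,\ell}) \to \Mprel_{g,\ell}$, which I would obtain from the properness theorem for relative twisted stable maps \cite[\S 8.3]{twistedstablemaps}, the relative form of the Abramovich--Corti--Vistoli result \cite{abramcortivistoli}. Its hypotheses ask that the relative target $\Sym d {\mathfrak D} \to \Mprel_{g,\ell}$ be proper and tame. Properness holds because the universal curve $\mathfrak D \to \Mprel_{g,\ell}$ is proper, hence so is its $d$-fold fibre power, and passing to the quotient $[\mathfrak D^{\times d}/S_d] = \Sym d {\mathfrak D}$ by the finite group $S_d$ preserves properness; tameness is automatic in characteristic zero. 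One point that deserves care is the gerbe-trivialization convention of Remark~\ref{rmk:trivialcostellogerbes}: our $\Ms^{tw}_{0,n}(u)$ maps to the Abramovich--Corti--Vistoli stack $\mathcal K_{0,n}$ as the universal gerbe, banded by finite groups, and since a gerbe banded by a finite group is proper, properness of $\mathcal K_{0,n}$ over $\Mprel_{g,\ell}$ transfers to $\Ms^{tw}_{0,n}(u)$.

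The main obstacle is thus not any single computation but the correct invocation of twisted-stable-maps properness in this relative setting, over the non-proper, non-Deligne--Mumford base $\Mprel_{g,\ell}$ and with the possibly-singular target $\Sym d {\mathfrak D}$ coming from a nodal universal curve. Granting this input, composing the proper factors above shows that $p$ is proper. The final clause ``thus pure'' then follows immediately from the first item of Remark~\ref{rmk:puredegprops}: a proper morphism is pure along any $X \to Y$.
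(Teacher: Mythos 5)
Your proof is correct and follows essentially the same route as the paper's: the paper likewise observes that $\MprelSd$ is a closed substack of a finite cover ($\prod S_{\ell_i}$-torsor) of $\Ms^{tw}_{0,n}(u)$, invokes properness of (twisted) stable maps to the proper target $\Sym{d}{\mathfrak D} \to \Mprel_{g,\ell}$, and deduces purity from Remark~\ref{rmk:puredegprops}. You merely spell out the factorization, the gerbe-trivialization point, and the closed-immersion steps in more detail than the paper's two-sentence argument.
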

\begin{proof}
It was constructed as a closed substack of a finite cover of the space of stable maps to a target that is proper over $\Mprel_{g,\ell}$.  Since stable maps to a proper target form a proper space, this implies that $p$ is proper.
\end{proof}

An example due to Costello \cite[Lemma 6.0.1]{Costello} of $(g, n, d, \Xi)$ where $p$ is generically finite is worked out in \S \ref{subsec:costelloexampledegree}. 

The stack of twisted stable maps $\Ms_{0, n}(\Sym d X)$ parametrizes $(C \leftarrow C' \to X)$ as in Step 1 above. There is a similar $\prod S_{\ell_i}$-torsor $\MsSym$ over $\Ms_{0, n}(\Sym d X)$ of orderings of the preimages in $C'$ of the marked points of $C$. This is the remaining piece of Diagram \eqref{eqn:pbmodcurvestacks}. The map $q$ sends $(C \leftarrow C' \to X)$ to the stabilization $\overline{C}' \to X$, and $\pi'$ sends it to $(C \leftarrow C' \to \overline{C}')$.

\begin{remark}\label{rmk:stabnfunctorial}
Stabilization $s : C \to \overline{C}$ of prestable maps $C \to X$ is functorial in that $X$-automorphisms $\varphi : C \simeq C$ all lie over unique $X$-automorphisms $\overline{\varphi} : \overline{C} \simeq \overline{C}$. Automorphisms of $C \to X$ lying over an automorphism of $\overline{C}$ form a subsheaf:
\[i : \Aut_X(C \to \overline{C}) \subseteq \Aut_X(C) \quad \quad \quad 
\begin{tikzcd}
C \ar[d] \ar[r, "\sim"]       &C \ar[d]      \\
\overline{C} \ar[r, "\sim"]        &\overline{C}
\end{tikzcd} \mapsto (C \simeq C).
\]
To argue $i$ is an isomorphism, assume the base is a geometric point \cite[03PU]{stacks-project}. But $\varphi$ must restrict to an automorphism on the union of unstable components over $X$, so $C \overset{\varphi}{\simeq} C \to \overline{C}$ is also the stabilization of $C \to X$. The map $\overline{\varphi}$ comes from canonicity of stabilization. 

\end{remark}

\begin{lemma}\label{lem:pbmodcurvestacksiscartesian}
The square \eqref{eqn:pbmodcurvestacks} is cartesian. 
\end{lemma}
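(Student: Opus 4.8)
The plan is to verify that the functor
\[
\Phi : \MsSymG \longrightarrow \MprelSdG \times_{\Mprel_{g,\ell}} \Ms_{g,\ell}(X)
\]
induced by the commutativity of the square is an equivalence. First I would unwind the target. Using the isomorphism in $\Mprel_{g,\ell}$ that witnesses a point of the fiber product to identify the curve $D$ underlying a point of $\MprelSdG$ with the source curve of a point of $\Ms_{g,\ell}(X)$, an object of the fiber product over a base $S$ is the same datum as a trio $(C \leftarrow C' \to D)$ with $C' \to D$ a partial stabilization, together with a stable map $h : D \to X$. The functor $\Phi$ sends $(C \leftarrow C' \to X)$ to the pair $\pi'(C \leftarrow C' \to X) = (C \leftarrow C' \to \overline{C}')$ and $q(C \leftarrow C' \to X) = (\overline{C}' \to X)$, where $C' \to \overline{C}' \to X$ is the stabilization factorization of $C' \to X$; this lands in the fiber product precisely because both components share the underlying curve $\overline{C}' \in \Mprel_{g,\ell}$.

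Next I would produce a candidate inverse $\Psi$ sending $\bigl((C \leftarrow C' \to D),\, h : D \to X\bigr)$ to $(C \leftarrow C' \xrightarrow{\,h \circ f\,} X)$, where $f : C' \to D$ is the given partial stabilization. The cover datum $C \leftarrow C'$, its monodromy profile $\Xi$, and the orderings of the marked points are untouched, so the only point needing checking for well-definedness is that $(C \leftarrow C' \to X)$ is a \emph{stable} twisted map of profile $\Xi$; I would deduce this from the stability of $h$ together with the partial-stabilization hypothesis on $f$, which between them bound the sheaf of automorphisms of the trio fixing $X$.

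The heart of the matter is that $\Phi$ and $\Psi$ are mutually inverse. The composite $\Psi \circ \Phi$ is canonically the identity: by construction of stabilization the original map $C' \to X$ factors through $C' \to \overline{C}' \to X$, so recomposing recovers it. The reverse composite $\Phi \circ \Psi$ is the crux, and this is the step I expect to be the main obstacle. Here I must show that for a partial stabilization $f : C' \to D$ and a stable map $h : D \to X$, the stabilization of $h \circ f : C' \to X$ is canonically $(D \xrightarrow{h} X)$ with contraction morphism equal to $f$. I would argue by comparing contraction loci: a component $E \subseteq C'$ is contracted in stabilizing $C' \to X$ exactly when it is rational with fewer than three special points and maps to a point of $X$. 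If $f(E)$ is a point, then $E$ is already contracted by $f$, since $f$ is a partial stabilization; if instead $f(E)$ is a whole component $F \subseteq D$ with $h(F)$ a point, then stability of $h$ forces $F$, and hence its preimage $E$, to carry at least three special points or positive genus, so $E$ is not contracted. Thus the two loci coincide, $\overline{C}' \cong D$ over $X$, and $h$ is recovered.

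Finally, full faithfulness of $\Phi$ follows from the functoriality of stabilization recorded in Remark~\ref{rmk:stabnfunctorial}: every isomorphism of trios $(C \leftarrow C' \to X)$ lies over a unique isomorphism of the stabilized data $(C \leftarrow C' \to \overline{C}')$ and $(\overline{C}' \to X)$, and conversely, so $\Phi$ is a bijection on isomorphisms. Combined with the essential surjectivity supplied by $\Psi$, this shows $\Phi$ is an equivalence, hence that the square~\eqref{eqn:pbmodcurvestacks} is cartesian.
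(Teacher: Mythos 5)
Your strategy---building an explicit inverse $\Psi$ by composition and checking that $\Phi$, $\Psi$ are mutually inverse---is a genuinely different route from the paper's, which instead observes that the prestable variants of all four stacks form a cartesian square for formal reasons and reduces the lemma to the single claim that $(C \leftarrow C' \to X)$ is stable if and only if $(C \leftarrow C' \to \overline{C}')$ is, which is what Remark~\ref{rmk:stabnfunctorial} supplies. However, your argument fails at exactly the step you flag as the crux. The criterion ``a component $E \subseteq C'$ is contracted in stabilizing $C' \to X$ exactly when it is rational with fewer than three special points and maps to a point'' is false, because stabilization is iterative: a rational component with three or more special points can still be contracted if contracting its neighbors first strips it of special points. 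Concretely, take a chain $E_0 - E_1 - E_2$ where $E_1$ carries one marked point and its two nodes, $E_2$ carries only its node with $E_1$ and no marks, and $E_1, E_2$ map to a point while $E_0$ does not: the first pass contracts $E_2$, after which $E_1$ has only two special points and is contracted in turn. Your Case B invokes precisely the false direction (``at least three special points or positive genus, so $E$ is not contracted''), so it rules out only first-pass contraction of $E$; it does not exclude that $E$ becomes unstable after the trees that $f$ contracts---which hang off $E$ and are contracted when stabilizing $h \circ f$, by your own Case A---are collapsed. As written, the equality $\overline{C}' \cong D$ is not established.

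The claim is true and can be repaired in two ways. (i) Run the comparison at the level of connected subcurves: a maximal connected subcurve $Z \subseteq C'$ contracted by stabilizing $h \circ f$ has arithmetic genus $0$, at most two special points, and constant image; since $f$ is a contraction followed by a coarse-space map, it is surjective with connected fibers, so maximality forces $Z = f^{-1}(W)$ with $W = f(Z)$; if $W$ were one-dimensional it would be a connected subcurve of $D$ contracted by $h$ with $\operatorname{genus}(W) = \operatorname{genus}(Z) = 0$ and at most two special points (special points of $W$ lift to special points of $Z$), contradicting stability of $h$; hence $f(Z)$ is a point and the two contraction loci agree. (ii) Alternatively, compute stabilization in stages: the stabilization of $h \circ f$ equals the stabilization of the induced map $\overline{C}' \to X$, which factors through the coarse space $D$ of $\overline{C}'$, and nothing further contracts because $h$ is already stable. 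Separately, note that the well-definedness of $\Psi$---stability of the composite trio $(C \leftarrow C' \to X)$---is only promised in your write-up, yet it is exactly the other half of the paper's stability equivalence; it needs Remark~\ref{rmk:stabnfunctorial}, e.g.\ by observing that the automorphism sheaf of the trio over $X$ is an extension of a subsheaf of the finite $\Aut_X(D)$ by the automorphisms over $\overline{C}'$, which are finite by stability of the relative map. So your proof ultimately leans on the same remark as the paper's in two places, not one.
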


\begin{proof}
If $\MsSym$ and $\MprelSd$ and $\overline M_{g,\ell}(X)$ are all replaced by their unstable variants, Diagram~\eqref{eqn:pbmodcurvestacks} is immediately commutative and cartesian. Given 
\[C \leftarrow C' \to \overline{C}' \to X\]
with $\overline{C}' \to X$ stable but $(C \leftarrow C' \to \overline{C}')$ not necessarily, we must show $(C \leftarrow C' \to X)$ is stable if and only if $(C \leftarrow C' \to \overline{C}')$ is. This is a consequence of Remark \ref{rmk:stabnfunctorial}.

\end{proof}

Diagram \ref{eqn:pbmodcurvestacks} is cartesian and $p$ is proper. It remains only to compare the perfect obstruction theories of $\pi$ and $\pi'$.

\begin{lemma}
The map 
\[\MprelSd \to \Mprel_{0, n}(BS_d) \times_{(BS_d)^n} \ast\] 
forgetting the partial stabilization is \'etale.

\end{lemma}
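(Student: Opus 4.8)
The plan is to establish étale-ness by verifying that the forgetful morphism is formally étale and locally of finite presentation, the point being that the partial stabilization $(D, f\colon C' \to D)$ carries no infinitesimal deformation data of its own because it is \emph{canonically reconstructed} from the cover $C' \to C$. First I would make the map explicit: an object of $\MprelSd$ over a base $S$ is a cover $C' \to C$ of twisted curves (equivalently a map $C \to BS_d$) together with a genus-$g$ prestable curve $D$ with $\ell$ ordered marked points and a partial stabilization $f\colon C' \to D$ respecting the orderings, whereas an object of $\Mprel_{0,n}(BS_d) \times_{(BS_d)^n} \ast$ retains only $C \to BS_d$ with the marked-point rigidification. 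A preliminary check is that the rigidification $\times_{(BS_d)^n} \ast$ on the target records exactly the ordering data encoded by the $\prod S_{\ell_i}$-torsor structure of the source (Remark~\ref{rmk:twistedstabmapsfixramification}), so that the morphism, which drops only $D$ and $f$, is well defined.

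The heart of the argument is that $D$ and $f$ are functorially determined by $C'$. Taking the stabilization $\overline f\colon \overline{C}' \to D$ of the marked prestable curve $C'$ is canonical and commutes with base change (\cite[\href{https://stacks.math.columbia.edu/tag/05XD}{Tag 05XD}]{stacks-project}, together with the functoriality in Remark~\ref{rmk:stabnfunctorial}); the partial-stabilization condition says precisely that $D$ is the coarse space of $\overline{C}'$, and over $\Spec \CC$ coarse-space formation for tame twisted curves likewise commutes with base change. Thus $(D,f)$ is the value of a functor applied to $C'$. To prove formal étale-ness I would take a square-zero extension $S_0 \hookrightarrow S$, a target object over $S$ (a cover $C'_S \to C_S$ with rigidification), and a lift to a source object over $S_0$. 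Any extension of $(D_0, f_0)$ to $S$ is forced to be the coarse space of the canonical stabilization of $C'_S$; this exists, restricts correctly over $S_0$ by base-change compatibility, and is unique, while the orderings extend uniquely as they are discrete data pinned down by the rigidification. This gives existence and uniqueness of infinitesimal lifts. Since both stacks are locally of finite presentation over $\Mprel_{g,\ell}$ and the forgetful morphism between them is of finite presentation, étale-ness follows.

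I expect the main obstacle to be the bookkeeping that makes "$(D,f)$ is canonical" precise in the twisted, relative setting. One must check that the \emph{absolute} stabilization of $C'$, which reconstructs $D$ without reference to any pre-given target, agrees on the partial-stabilization locus with the stabilization of the map $f$: this holds because $\overline{C}' \to D$ is a coarse-space map and contracts nothing, so $\overline{C}'$ is already stable as a marked curve. One must also verify that stabilization and coarse-space formation interact correctly with the monodromy profile $\Xi$ and the orderings, so that the reconstructed object lands in the correct open and closed component (Lemma~\ref{lem:partstabnclopen}). Once this compatibility is in hand, the étale property is formal.
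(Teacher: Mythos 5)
Your overall strategy---the formal criterion for \'etaleness, with the key point that the data $(D,f)$ carries no deformations of its own beyond those of $C'\to C$---is the same one the paper uses; its proof is a one-line citation of exactly such formal-criterion arguments (\cite[Lemma 7]{gwinvtsag}, \cite[Lemma B (ii)]{compthmsforgwisofsmpairs}). However, the claim your lifting argument actually rests on is false: $(D,f)$ is \emph{not} canonically reconstructed from $C'$, so the step ``any extension of $(D_0,f_0)$ is forced to be the coarse space of the canonical stabilization of $C'_S$'' does not hold.

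The problem is that $D$ is an object of $\Mprel_{g,\ell}$, i.e.\ merely \emph{prestable}, so the map-stabilization $\overline{C}'$ (whose coarse space is $D$) need not be stable as an abstract marked curve; your justification ``$\overline{C}'\to D$ is a coarse-space map and contracts nothing, so $\overline{C}'$ is already stable'' is a non sequitur, since admitting a coarse-space map says nothing about stability. Concretely, let $C_2$ be an unmarked component of $C$ whose preimage in $C'$ is a disjoint union of unmarked rational tails $E_1,\dots,E_d$, each attached at one node. A partial stabilization $f:C'\to D$ may contract any \emph{proper} subset of $\{E_1,\dots,E_d\}$: each such choice yields a valid, stable trio $(C\leftarrow C'\to D)$, and these are distinct points of $\MprelSd$ in the fiber over the same $(C'\to C)$. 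So the forgetful map is \'etale but very far from a monomorphism, and no functor $C'\mapsto (D,f)$ exists. Worse, your recipe (absolute stabilization, then coarse space) contracts \emph{all} the $E_i$, and that trio is not even a point of $\MprelSd$: automorphisms of $C_2$ fixing its node lift to the contracted components and act trivially on $D$, so the trio has a positive-dimensional automorphism group and is unstable. The correct formal-criterion argument does not reconstruct $D$; it fixes the contraction data (which components $f$ contracts is discrete, hence unchanged under a square-zero extension $S_0\hookrightarrow S$) and then proves that the given deformation of $C'\to C$ induces a deformation of the given contraction $f:C'\to D$, existing and unique---a statement about deformations of contractions (e.g.\ via a relative $\mathcal{P}roj$ of a section algebra trivial on the contracted components), which is precisely what the two cited lemmas supply.
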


\begin{proof}

Apply the formal criterion as in \cite[Lemma 7]{gwinvtsag} or {\cite[Lemma B (ii) for $\Upsilon$]{compthmsforgwisofsmpairs}}.

\end{proof}

\begin{corollary}
The perfect relative obstruction theories on $\pi'$ induced from the natural ones on $\pi$ and from $\Ms_{0, n}(\Sym d X) \to \Mprel_{0, n}(BS_d)$ coincide. 
\end{corollary}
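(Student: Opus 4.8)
The plan is to write each of the two relative obstruction theories explicitly as the dual of a derived pushforward of $T_X$ along a universal map to $X$, and then to observe that the two universal maps differ only by a stabilization contraction, which does not change the relevant pushforward.

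First I would record the theory induced from $\pi$. The relative obstruction theory of $\pi : \Ms_{g,\ell}(X) \to \Mprel_{g,\ell}$ is the standard one, $(R\bar\rho_\ast \bar f^\ast T_X)^\vee$, where $\bar\rho : \overline{\mathcal C} \to \Ms_{g,\ell}(X)$ is the universal stable curve and $\bar f : \overline{\mathcal C} \to X$ the universal map. Since the square \eqref{eqn:pbmodcurvestacks} is cartesian (Lemma \ref{lem:pbmodcurvestacksiscartesian}), the theory induced on $\pi'$ is its pullback $q^\ast (R\bar\rho_\ast \bar f^\ast T_X)^\vee = (R\bar\rho'_\ast (\bar f')^\ast T_X)^\vee$, where $\bar\rho' : \overline{\mathcal C}' \to \MsSymG$ is the universal \emph{stabilized} cover and $\bar f' : \overline{\mathcal C}' \to X$ its map.

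Next I would compute the theory induced from $\Ms_{0,n}(\Sym d X) \to \Mprel_{0,n}(BS_d)$. This map forgets the map to $X$ and remembers only the cover $C' \to C$, so its relative deformation theory is that of the map $C' \to X$; its natural relative obstruction theory is therefore $(R\rho'_\ast (f')^\ast T_X)^\vee$, where $\rho' : \mathcal C' \to \Ms_{0,n}(\Sym d X)$ is the universal cover (not stabilized) and $f' : \mathcal C' \to X$ its map. To transport this onto $\pi'$, I would use that the data separating $\MsSymG$ and $\MprelSdG$ from $\Ms_{0,n}(\Sym d X)$ and $\Mprel_{0,n}(BS_d) \times_{(BS_d)^n} \ast$ — the gerbe trivializations, the orderings of the fibers over the marked points, and the partial stabilization — all concern the cover alone and not the map to $X$. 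Hence the square with vertical arrows $\pi'$ and $\Ms_{0,n}(\Sym d X) \to \Mprel_{0,n}(BS_d)$, whose lower horizontal arrow is the étale morphism of the preceding lemma, is cartesian. Because étale morphisms have vanishing relative cotangent complex, the pullback of the obstruction theory above is precisely the theory induced on $\pi'$, and it equals $(R\rho'_\ast (f')^\ast T_X)^\vee$ for the pulled-back universal cover $\mathcal C'$.

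Finally I would compare the two. The universal cover $\mathcal C'$ and its stabilization $\overline{\mathcal C}'$ are related by the relative stabilization $s : \mathcal C' \to \overline{\mathcal C}'$ over $\MsSymG$, and $f' = \bar f' \circ s$ because $f'$ is constant on exactly the components that $s$ contracts. These components form trees of (twisted) rational curves, so $Rs_\ast \mathcal O_{\mathcal C'} = \mathcal O_{\overline{\mathcal C}'}$; the projection formula then gives $Rs_\ast (f')^\ast T_X = Rs_\ast s^\ast (\bar f')^\ast T_X = (\bar f')^\ast T_X$, whence $R\rho'_\ast (f')^\ast T_X = R\bar\rho'_\ast (\bar f')^\ast T_X$ and the two obstruction theories coincide. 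I expect the main obstacle to be not this last identity, which is routine, but the setup of the previous paragraph: verifying that the comparison square is genuinely cartesian with étale lower arrow, and that $Rs_\ast \mathcal O = \mathcal O$ persists for the stabilization of the orbifold curves in question, together with their trivialized marked gerbes.
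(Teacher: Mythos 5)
Your route is the one the paper intends: the paper offers no written proof of this corollary beyond the preceding \'etale lemma, and the substance you supply --- both theories are duals of derived pushforwards of the pulled-back $T_X$ along the two universal curves, and these agree because the stabilization $s : \mathcal C' \to \overline{\mathcal C}'$ contracts only (twisted) rational trees on which $f'$ is constant, so that $Rs_\ast \OO_{\mathcal C'} = \OO_{\overline{\mathcal C}'}$ and the projection formula gives $R\rho'_\ast (f')^\ast T_X \cong R\bar\rho'_\ast (\bar f')^\ast T_X$ --- is exactly the content the paper leaves implicit, correctly executed. (The coarsification implicit in $s$, since $\overline{\mathcal C}'$ is an untwisted curve, also satisfies $Rs_\ast \OO = \OO$ by tameness, as you note.)

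One intermediate claim, which you yourself flag as the expected main obstacle, is in fact false: your comparison square is \emph{not} cartesian. A point of the fiber product of $\MprelSd$ and $\Ms_{0, n}(\Sym{d}{X})$ over $\Mprel_{0, n}(BS_d) \times_{(BS_d)^n} \ast$ consists of a cover $C' \to C$ (with its orderings and trivializations) together with two \emph{independent} pieces of data: a partial stabilization $C' \to D$ and a stable map $C' \to X$. Nothing in the fiber product forces $D$ to be the stabilization $\overline{C}'$ of the map to $X$, whereas on $\MsSym$ this holds by the very definition of $\pi'$; so the natural map from $\MsSym$ to the fiber product is not an isomorphism. Fortunately your argument never needs cartesianness. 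It needs only that the square commutes and that both horizontal arrows are \'etale: the top is (an open-and-closed substack of) a $\prod S_{\ell_i}$-torsor over $\Ms_{0, n}(\Sym{d}{X})$, the bottom is \'etale by the paper's lemma, and $\Mprel_{0, n}(BS_d) \times_{(BS_d)^n} \ast \to \Mprel_{0, n}(BS_d)$ is finite \'etale. These facts already identify $L_{\pi'}$ with the pullback of $L_{\Ms_{0, n}(\Sym{d}{X})/\Mprel_{0, n}(BS_d)}$, which is all your transport step uses. With that replacement --- and the routine verification that your isomorphism of complexes commutes with the maps to $L_{\pi'}$, so that the two theories agree as obstruction theories and not merely as objects of the derived category --- the proof is complete.
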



\subsection{Costello's example computation}\label{subsec:costelloexampledegree}

We now compute the pure degree $e$ of $p : \tilde{\frak M}_{0, n}(BS_d) \to \Mprel_{g, \ell}$ in the setting of \cite[Lemma 6.0.1]{Costello} to check consistency.

Remark \ref{rmk:twistedstabmapsfixramification} lets one fix discrete data $(g, d, n, \ell_i, \Xi)$ illustrated in Figure \ref{fig:excostellodiscretedata}. Let $d = g+1$ and $k$ be an integer to be specified later. Require $n \leq \ell \leq dn$, as $\ell = \sum \ell_i$ counts marked points of the source $C'$ and $n$ of the target $C$.

Consider decompositions $\num{n} = \num{k} \sqcup J \sqcup \{\infty\}$ and $\num{\ell} = \num{k\cdot d}\sqcup J^d \sqcup I$ and a function $d : I \to \mathbb{Z}_{\geq 1}$, with $m(\infty):= \text{lcm}(d(i))$. Define 
\[\Xi = 
\left\{\begin{tikzcd}[row sep=small]
g(C') = g, \,g(C) = 0,        \\
\num{\ell} \to \num{n} \text{ is } \num{kd} = \num{k} \times \num{d} \overset{pr_1}{\mapsto} \num{k},\,\, J^d = J \times \num{d} \overset{pr_1}{\mapsto} J,\,\, I \mapsto \infty, \\
\forall j \in J,\,\, r_j = 1, \forall i \in \num{k}\,\, r_i = 2, r_{\infty} = m(\infty)       \\
\bigsqcup_J B\mu_1 = \ast \to BS_d,\,\, \bigsqcup_{i \in \num{k}} B\mu_2 \overset{\psi}{\to} BS_d,\,\, B \mu_{m(\infty)} \overset{\phi}{\to} BS_d
\end{tikzcd}\right\}\]
Here $\psi$ corresponds to the action $\mu_2 \action \num{d}$ with one 2-cycle and the other points fixed, while $\phi$ corresponds to the action $\mu_{m(\infty)} \action \num{d}$ with each orbit of size $\dfrac{m(\infty)}{d(i)}$ having stabilizer of size $d(i)$ (well-defined up to choice of ordering of $\num{d}$ in $\MprelSd$). These discrete data define a substack $\tilde M_\Xi(\Sym d X) \subseteq \MsSym$.

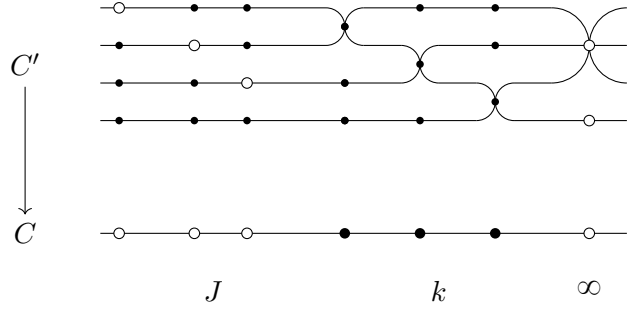
\begin{figure}
    \centering
\begin{tikzpicture}
\node at (-1, .75){$C'$};
\draw[->] (-1, .45) to (-1, -1.25);
\node at (-1, -1.5){$C$};
\draw[-] (0, 0) to (3, 0);
\draw[-] (0, 0.5) to (3, 0.5);
\draw[-] (0, 1) to (3, 1);
\draw[-] (0, 1.5) to (3, 1.5);
\draw (3, 1) arc (-90:90:.25);
\draw (3.5, 1.5) arc (90:270:.25);
\draw[-] (3.5, 1.5) to (6, 1.5);
\draw (4, .5) arc (-90:90:.25);
\draw (4.5, 1) arc (90:270:.25);
\draw (5, 0) arc (-90:90:.25);
\draw (5.5, 0.5) arc (90:270:.25);
\draw[-] (3.5, 1) to (4, 1);
\draw[-] (4.5, 1) to (7, 1);
\draw[-] (3, .5) to (4, .5);
\draw[-] (4.5, .5) to (5, .5);
\draw[-] (5.5, .5) to (6, .5);
\draw[-] (3, 0) to (5, 0);
\draw[-] (5.5, 0) to (7, 0);
\draw (6, 0.5) arc (-90:90:.5);
\draw (7, 1.5) arc (90:270:.5);
\draw[-] (0, -1.5) to (7, -1.5);
\node[below] at (1.5, -2){$J$};
\fill[white] (0.25, -1.5) circle (.07cm);
\fill[white] (1.25, -1.5) circle (.07cm);
\fill[white] (1.95, -1.5) circle (.07cm);
\draw (0.25, -1.5) circle (.07cm);
\draw (1.25, -1.5) circle (.07cm);
\draw (1.95, -1.5) circle (.07cm);
\fill (0.25, 0) circle (.05cm);
\fill (1.25, 0) circle (.05cm);
\fill (1.95, 0) circle (.05cm);
\fill (0.25, 0.5) circle (.05cm);
\fill (1.25, 0.5) circle (.05cm);
\fill[white] (1.95, 0.5) circle (.07cm);
\draw (1.95, 0.5) circle (.07cm);
\fill (0.25, 1) circle (.05cm);
\fill[white] (1.25, 1) circle (.07cm);
\draw (1.25, 1) circle (.07cm);
\fill (1.95, 1) circle (.05cm);
\fill[white] (0.25, 1.5) circle (.07cm);
\draw (0.25, 1.5) circle (.07cm);
\fill (1.25, 1.5) circle (.05cm);
\fill (1.95, 1.5) circle (.05cm);
\fill (3.25, 0) circle (.05cm);
\fill (3.25, 0.5) circle (.05cm);
\fill (3.25, 1.25) circle (.05cm);
\fill (3.25, 0) circle (.05cm);
\fill (3.25, 0.5) circle (.05cm);
\fill (3.25, 1.25) circle (.05cm);
\fill (4.25, 0) circle (.05cm);
\fill (4.25, 0.75) circle (.05cm);
\fill (4.25, 1.5) circle (.05cm);
\fill (5.25, 0.25) circle (.05cm);
\fill (5.25, 1) circle (.05cm);
\fill (5.25, 1.5) circle (.05cm);
\fill (3.25, -1.5) circle (.07cm);
\fill (4.25, -1.5) circle (.07cm);
\fill (5.25, -1.5) circle (.07cm);
\node[below] at (4.5, -2){$k$};
\fill[white] (6.5, 1) circle (.07cm);
\draw (6.5, 1) circle (.07cm);
\fill[white] (6.5, 0) circle (.07cm);
\draw (6.5, 0) circle (.07cm);
\fill[white] (6.5, -1.5) circle (.07cm);
\draw (6.5, -1.5) circle (.07cm);
\node[below] at (6.5, -2){$\infty$};
\end{tikzpicture}
    \caption{A cover in $\Xi$, $g=3, d=4$. Marked points are colored black if forgotten and white if remembered under the map to $\Mprel_{g, \ell-A}$}
    \label{fig:excostellodiscretedata}

\end{figure}

Choose $A \subseteq \num{k \cdot d} \sqcup J^d \subseteq \num{\ell}$ such that $\num{k \cdot d} \subseteq A$ and $J^d \setminus (A \cap J^d) \to J$ is a bijection. Write $\ell-A$ abusively for the set $\num{\ell} \setminus A = I \sqcup J$. Define a map $\pi : \tilde M_\Xi(\Sym d X) \to \Ms_{g, \ell-A}(X)$ as the composite of $p$ from Diagram \ref{eqn:pbmodcurvestacks} and the map forgetting the marked points $A$ and stabilizing. Define a stack $\MprelSdG$ as above to keep track of triples $(C \leftarrow C' \to D)$ with $C' \to D$ a partial stabilization \textit{after forgetting} the points labelled by $A$, $C' \to C$ a $d$-sheeted cover, and an ordering on the preimages of $C$'s marked points. This fits in a cartesian square
\[\begin{tikzcd}
\MsSymG \ar[r] \ar[d] \pb        &\Ms_{g, \ell-A}(X) \ar[d]       \\
\MprelSdG \ar[r, "p'"]       &\Mprel_{g, \ell-A}.
\end{tikzcd}\]    
One shows this square is cartesian as in Lemma \ref{lem:pbmodcurvestacksiscartesian} and that $p'$ is proper as in Corollary \ref{cor:costellopisproper}. We compute its pure degree after \cite[Lemma 6.0.1]{Costello}:

\begin{theorem}\label{thm:costellopushforwarddegreecalculation}
With the above discrete data $\Xi$ and $k = \# I + 3g -1$, $\dim \MprelSdG = \dim \Mprel_{g, \ell-A}$ and the map $p'$ between them is of pure degree
\[e = \dfrac{k! (g!)^{\# J} (g!)^k}{2^k \cdot m(\infty)}.\]
\end{theorem}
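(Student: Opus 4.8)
The plan is to reduce the computation of the pure degree to a count of the generic fibre of $p'$. Since $p'$ is proper (as in Corollary~\ref{cor:costellopisproper}) and we will see it is generically finite, its pure degree is the degree over the generic point of $\Mprel_{g,\ell-A}$, which for general $[D]$ (a smooth curve with no automorphisms) equals $\sum_{x}\tfrac{1}{|\mathrm{Aut}(x)|}$, the sum over the geometric points $x$ of the fibre weighted by the reciprocals of their automorphism groups.

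First I would settle the dimension equality. A point of $\MprelSdG$ is determined, up to the finite data of the ordering and the open-and-closed partial-stabilization condition, by the genus-zero $n$-marked twisted base curve $C$ together with its $\Xi$-cover, the cover being rigid because its monodromy is discrete and $D$ being recovered as the stabilization of $C'$; hence $\dim\MprelSdG=\dim\Mprel_{0,n}=n-3=k+\#J-2$, where $n=k+\#J+1$. On the other hand $\dim\Mprel_{g,\ell-A}=3g-3+\#I+\#J$. These agree precisely when $k=\#I+3g-1$, which is exactly our hypothesis. Equivalently, this is the Riemann--Hurwitz constraint forcing $g(C')=g$: for the degree-$d$ coarse cover $|C'|\to|C|=\mathbb P^1$ one computes total ramification $\sum(e_p-1)=4g$, of which $d-\#I=g+1-\#I$ sits over $\infty$ and the remaining $k=\#I+3g-1$ is distributed among simple (transposition) branch points.

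Next I would identify the generic fibre explicitly. For a general smooth $[D]$ of genus $g$ with marked points $\{p_i\}_{i\in I}\sqcup\{q_j\}_{j\in J}$, any $x$ in the fibre has $C=\mathbb P^1$ and coarse source $|C'|=D$, and the cover is the coarse degree-$(g+1)$ map $\varphi\colon D\to\mathbb P^1$ whose fibre over $\infty$ is the divisor $\sum_{i\in I}\tfrac{m(\infty)}{d(i)}\,p_i$. This forces the line bundle to be $L=\mathcal O_D\bigl(\sum_i \tfrac{m(\infty)}{d(i)}\,p_i\bigr)$, of degree $g+1$; for general $D$ and general $p_i$ one has $h^1(L)=0$, so $h^0(L)=2$ and the pencil $|L|$ is unique. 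Hence $\varphi=\varphi_L$ is the unique such map up to $\mathrm{PGL}_2$ (a Brill--Noether/gonality input), its branching away from $\infty$ consists of exactly $k$ simple branch points for general $D$, and the cover $C'\to C$ has no deck transformations.

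Finally I would assemble the count. The geometric points of the fibre are indexed by: the $k!$ labellings of the $k$ simple branch points by $\num{k}$; for each of the $\#J$ unramified marked fibres, the $(d-1)!=g!$ orderings of the $d-1=g$ preimages that are forgotten under $A$ (the retained preimage being pinned to $q_j$); and for each of the $k$ transposition fibres, the $(d-1)!=g!$ orderings of its $d-1=g$ preimages, all of which lie in $A$. This gives $k!\,(g!)^{\#J}(g!)^k$ geometric points. Each such twisted stable map carries automorphisms only from the stacky marked points of the base, namely $\mu_2$ at each of the $k$ points carrying $\psi$ and $\mu_{m(\infty)}$ at $\infty$ (the $J$-points are untwisted), so $|\mathrm{Aut}(x)|=2^k\,m(\infty)=\prod_i r_i$; equivalently this is the degree $\tfrac{1}{2^k m(\infty)}$ of the universal gerbe of Remark~\ref{rmk:trivialcostellogerbes}. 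Dividing yields $e=\dfrac{k!\,(g!)^{\#J}(g!)^k}{2^k\,m(\infty)}$. The main obstacle is the third paragraph: pinning down the generic fibre requires the Brill--Noether input that $L$ is forced and satisfies $h^0(L)=2$ (so the cover is unique and the Hurwitz contribution is $1$), together with the transversality statement that the residual branching is simple with exactly $k$ points; a secondary, bookkeeping obstacle is correctly attributing the factor $\tfrac{1}{2^k m(\infty)}$ to the automorphisms of the twisted stable maps, i.e.\ matching the trivialized-gerbe convention to the product of ramification orders.
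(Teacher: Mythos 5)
Your overall route---reduce the pure degree to a weighted count of the generic fibre, identify that fibre with coarse covers $f \colon D \to \PP^1$ with prescribed fibre over $\infty$, then enumerate the discrete decorations---is the same as the paper's, and your dimension count and final enumeration $k!\,(g!)^{\#J}(g!)^k$ against the factor $1/(2^k m(\infty))$ agree with it (your attribution of the denominator to automorphisms of the twisted stable map is the same computation as the paper's via the degree of the universal gerbe of Remark~\ref{rmk:trivialcostellogerbes}). However, your third paragraph has a genuine gap, and it is exactly the gap in Costello's original Lemma~6.0.1 that this theorem is meant to repair. From ``$h^1(L)=0$, so $h^0(L)=2$ and the pencil $|L|$ is unique'' you cannot conclude that a map with $f^\ast\infty = B$ exists, let alone that it is unique: the pencil $|B|$ may have base points inside $B$, in which case every rational function with poles bounded by $B$ has polar divisor strictly smaller than $B$, and the fibre of $p'$ over such a marked curve is \emph{empty}. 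The paper's remark after the proof records the counterexample: $D$ of genus two, $B = 2p+q$ with $p$ a Weierstrass point; then $h^1(\OO(B))=0$ and $h^0(\OO(B)) = 2 = h^0(\OO(2p))$, so every section of $\OO(B)$ has poles contained in $2p$ and no map with $f^\ast\infty = B$ exists, even though all the conditions you invoke hold. The input actually needed is stronger: \emph{no effective subdivisor} $0 \le B' < B$ of degree $g$ is special, i.e.\ $h^0(\OO(B'))=1$ for every such $B'$. This single condition gives $h^1(\OO(B))=0$, existence (a nonconstant section of $\OO(B)$ must then have polar divisor exactly $B$), and uniqueness up to automorphisms of $(\PP^1,\infty)$. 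Establishing it for general $(D,\{p_i\})$ is the content of the paper's Lemma~\ref{lem:nospecialeffectivesubdivs}: the special locus is closed in $\Div^g$ by semicontinuity, and its pullback to $D^s$ is a \emph{proper} closed subscheme because otherwise a general point of $D$ would be a Weierstrass point. Note that generality of the marked points, and not merely of $D$, is essential; your hypothesis ``general $D$ and general $p_i$'' is the right one, but the argument you give uses generality only to obtain $h^1(L)=0$, which the Weierstrass configuration shows is insufficient.

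Two smaller points. First, your description of the fibre over $\infty$ as $\sum_i \tfrac{m(\infty)}{d(i)}\,p_i$ is inconsistent with your own Riemann--Hurwitz count in the second paragraph: the proof takes $B=\sum_i d(i)\,p_i$, of degree $\sum_i d(i) = g+1 = d$ (the coarse ramification index at $p_i$ is the orbit size, and the stabilizer has order $m(\infty)/d(i)$). Second, the claim that the residual branching consists of exactly $k$ simple, distinct branch points is asserted but not argued; the paper justifies it by the dimension count for spaces of covers of $\PP^1$ (a more ramified cover moves in a smaller-dimensional family, so the general $f$ is simply branched away from $B$). With Lemma~\ref{lem:nospecialeffectivesubdivs} and that transversality statement supplied, the rest of your count goes through and coincides with the paper's.
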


\begin{proof}

The map from $\MprelSdG$ that forgets $D$ is \'etale, so we can ignore $D$ to calculate its dimension. Our moduli spaces are the closure of strata considered by Costello, so we again have
\[\dim \MprelSdG = k + \#J - 2, \quad \quad \quad \dim \Mprel_{g, \ell-A} = 3g-3 + \#I + \#J.\]
These are equal by definition of $k$. Because the dimensions are equal, the preimage of the generic point must either be the generic point of the source or empty. The generic point of the source has smooth $C', C$ by design, hence $C' \overset{\sim}{\to} D$ is an isomorphism. We've reduced to the case considered by Costello.

Fix general points $q_1, \dots, q_s \in D$ and write $B = \sum d(i) [q_i]$ for the induced divisor of degree $g+1$.

\textbf{Claim:}
There are no special effective subdivisors $0 \leq B' < B$ of degree $g$.  

We outsource the proof to Lemma \ref{lem:nospecialeffectivesubdivs}. We conclude as in Costello's original argument. Any effective $B' < B$ of degree $g$ is not special, so $h^1(\OO(B')) = h^1(\OO(B)) = 0$. Riemann-Roch gives $h^0(\OO(B')) = 1$ and $h^0(\OO(B)) = 2$. This means there is at most one map $f : D \to \PP^1$ with $f^\ast \infty \leq B$ up to isomorphism and no such maps with $f^\ast \infty \leq B'$; i.e., $f^\ast \infty = B$.

The dimension of the moduli space of covers of $\PP^1$ is determined by the number of marked and branch points \cite[\S 1.G]{harris1998moduli}. A more-ramified cover has fewer branch points, so general maps $f : D \to \PP^1$ as above are \emph{simply} ramified at distinct points away from $B$. 

It remains to promote the source and target of $f : D \to \PP^1$ to $\num{\ell}$- and $\num{n}$-marked curves. Endowing $D, \PP^1$ with stack structure to make $f$ \'etale, we must then trivialize the $\mu_2^k \times \mu_{m(\infty)}$ marked gerbes of $\PP^1$ as per our conventions in Remark \ref{rmk:trivialcostellogerbes}. On moduli, this is a gerbe of pure degree $\dfrac{1}{2^k m(\infty)}$. Ordering the $k$ images of the simple ramification points, each of their fibers, and the fibers of the marked points labelled by $J$ constitutes an $S_k \times (S_g)^k \times (S_g)^{\#J}$-torsor (the black points in Figure \ref{fig:excostellodiscretedata}). This gives the multiplicity $e$.

\end{proof}

Noam Elkies' response to \cite{380120} led to this lemma.

\begin{lemma}\label{lem:nospecialeffectivesubdivs}

Fix multiplicities $d : I \to \ZZ_{\geq 1}$ with $\sum d(i) = g+1$. General points $q_1, \dots, q_s \in D$ on a general smooth curve engender a divisor $B = \sum d(i)[q_i]$. There are no special effective subdivisors $0 \leq B' < B$ of degree $g$.

\end{lemma}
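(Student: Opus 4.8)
The plan is to prove the statement by reformulating it as a general position question about the Brill--Noether theory of a general curve, and then invoke the non-existence of special linear series of the relevant degree. Concretely, a divisor $B'$ with $0 \leq B' < B$ of degree $g$ is \emph{special} precisely when $h^1(\OO(B')) > 0$, equivalently (by Riemann--Roch) when $h^0(\OO(B')) \geq 1$, i.e. when $B'$ moves in a linear system of positive dimension, or when $K - B'$ is effective. So the claim is that for $B = \sum d(i)[q_i]$ built from general points $q_i$ on a general curve $D$, no effective subdivisor of degree exactly $g$ is special. First I would fix what ``subdivisor'' means here: since $0 \le B' < B$ and $\deg B' = g = \deg B - 1$, the subdivisor $B'$ is obtained from $B$ by deleting one unit of multiplicity at a single point $q_i$ (this uses the integrality of the $d(i)$), so there are only finitely many candidate $B'$, one for each $i$ with $d(i) \geq 1$ (equivalently each $i \in I$), namely $B' = B - [q_i]$.

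Having reduced to finitely many candidates, I would show each is nonspecial for a general choice of the $q_i$. The cleanest route is a dimension/genericity argument: speciality of $B'$ means $h^0(K_D - B') > 0$, i.e. $B'$ imposes dependent conditions on the canonical system, i.e. there is a canonical divisor through $B'$. For a \emph{fixed} effective divisor of degree $g$ on a general curve, being special is a closed, codimension-$\geq 1$ condition, and the key input is that on a general curve of genus $g$ a general effective divisor of degree $g$ is nonspecial (indeed $h^0(\OO(B')) = 1$ for general $B'$ of degree $g$, by Riemann--Roch together with $h^1 = 0$). The subtlety is that our $B'$ is not a fully general degree-$g$ divisor: it is supported at the general points $q_i$ but with prescribed multiplicities $d(i)$, and one point $q_i$ appears with multiplicity $d(i) - 1$. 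I would therefore argue that the locus of $(q_1, \dots, q_s) \in D^s$ for which some $B' = B - [q_i]$ is special is a proper closed subset, so its complement is dense; combined with the genericity of $D$ itself this gives the statement.

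The technical heart is controlling the multiple points: I would use that on a general curve the Wronskian/ramification behavior is as generic as possible, so that imposing a point with multiplicity $d(i)$ imposes the expected $d(i)$ independent conditions on sections of any line bundle in the relevant range, as long as $d(i)$ does not exceed the gonality-type bounds. The cleanest formalization is to note $\sum d(i) = g+1$, so $B$ itself has degree $g+1$ and by Riemann--Roch on a nonspecial $B$ one has $h^0(\OO(B)) = 2$; speciality of the degree-$g$ subdivisor $B'$ would force $h^0(\OO(B')) \geq 2$ by the exact sequence $0 \to \OO(B') \to \OO(B) \to \OO_{q_i} \to 0$ read backwards, contradicting $h^0(\OO(B')) \le h^0(\OO(B)) = 2$ only in the borderline case, so the real content is ruling out $h^0(\OO(B')) = h^0(\OO(B)) = 2$, which would mean $q_i$ is a base point of $|B|$. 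Thus I would reduce the whole lemma to: \emph{for general $q_i$, the linear system $|B|$ is basepoint free} (equivalently, the unique pencil computed in the proof of Theorem~\ref{thm:costellopushforwarddegreecalculation} separates the points of $B$).

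The main obstacle I expect is exactly this last reduction to basepoint-freeness with prescribed multiplicities: one must ensure the general position of the $q_i$ genuinely prevents $q_i$ from being a base point of $|B|$, which is a statement about the general curve's Brill--Noether and ramification theory rather than a formal manipulation. I would handle it either by a degeneration/specialization argument to a nodal or chain-of-elliptic-curves model where the cohomology of such divisors can be computed directly, or by citing the known fact (attributed in the excerpt to Elkies' response) that general points on a general curve impose independent conditions so that no proper effective subdivisor of a degree-$(g+1)$ general divisor can already carry the full pencil. Everything else---the reduction to finitely many candidate subdivisors, the Riemann--Roch bookkeeping, and the closedness of the special locus---is routine.
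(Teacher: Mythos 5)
Your outline reproduces the paper's framing correctly: since $\deg B' = \deg B - 1$, every effective subdivisor of degree $g$ is $B' = B - [q_j]$ for some $j$, so there are finitely many candidates, each of the form $\sum b_i[q_i]$ with fixed multiplicities summing to $g$; and the locus of tuples $(q_1,\dots,q_s) \in D^s$ for which such a divisor is special is closed, by semicontinuity pulled back along $(p_i) \mapsto \sum b_i[p_i]$, $D^s \to \Div^g$. The gap is that you never prove this closed locus is a \emph{proper} subset of $D^s$ --- and that is the entire content of the lemma. You correctly flag that it cannot follow from ``a general divisor of degree $g$ is nonspecial,'' since the image of $D^s \to \Div^g$ is a proper subvariety as soon as some $b_i \geq 2$; but neither of your proposed remedies closes the hole. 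The ``known fact'' you want to cite (general points with prescribed multiplicities impose independent conditions on the canonical system) \emph{is} the statement to be proven, so invoking it is circular; and the degeneration to a nodal or chain-of-elliptic-curves model is not carried out and would require limit-linear-series machinery far heavier than needed. Your intermediate reduction to basepoint-freeness of $|B|$ is also not quite right: nonspeciality of $B' = B - [q_i]$ requires both that $B$ be nonspecial and that $q_i$ not be a base point of $|B|$ (if $B$ is special then $h^1(B') \geq h^1(B) > 0$, so $B'$ is special regardless of base points), and in any case basepoint-freeness is another unproved genericity claim of exactly the same nature.

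The paper closes this gap with a short specialization trick, which is the substance of Elkies' contribution: the closed locus $P_{\{b_i\}} \subseteq D^s$ of special configurations, were it all of $D^s$, would contain the diagonal $\Delta_D$; on the diagonal the divisor $\sum b_i[p]$ becomes $g[p]$, which is special precisely when $p$ is a Weierstrass point of $D$; a smooth curve over $\CC$ has only finitely many Weierstrass points, so $P_{\{b_i\}}$ misses most of the diagonal and is therefore a proper closed subscheme of the irreducible variety $D^s$. General $(q_1,\dots,q_s)$ then avoid the finitely many loci $P_{\{b_i\}}$, one for each choice of $j$. Note that this argument requires no generality assumption on $D$ and no degeneration; it is exactly the kind of concrete nontriviality certificate your proposal identifies as ``the technical heart'' but leaves unsupplied.
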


\begin{proof}

Fix numbers $b_1, \dots, b_s \in \NN$ adding up to $\sum b_i = g$ to obtain a map
\[D^s \to \Div^g; \quad \quad \quad (p_1, \dots, p_s) \mapsto \sum b_i [p_i].\]
The locus of special divisors is closed in $\Div^g$, as can be seen by applying upper semicontinuity theorem \cite[Theorem III.12.8]{hartshorne1977algebraic} to the universal sheaf $\OO(\mathscr{B})$ for the fibers of the projection $\pi : D \times \Div^g \to \Div^g$. 
We argue the pullback $P_{\{b_i\}} \subseteq D^s$ of the locus in $\Div^g$ of special divisors is a \emph{proper} closed subscheme. If $P_{\{b_i\}}$ contained the diagonal $\Delta_D$, a general point $p \in D$ would be a Weierstrass point. There are finitely many Weierstrass points on a curve over $\CC$, so $P_{\{b_i\}}$ is a proper closed subscheme. 

For any $1 \leq j \leq s$, we obtain a sequence
\[b_i := 
\left\{\begin{tikzcd}[row sep=small]
d(i)        &\text{if }i \neq j       \\
d(i)-1      &\text{if }i = j.
\end{tikzcd}.\right.\]
Our general points $q_1, \dots, q_s \in D$ are not in any $P_{\{b_i\}}$, so our divisor $B := \sum d(i) [q_i]$ contains no special effective divisors of degree $g$.

\end{proof}

\begin{remark}
Taking $C$ alone to be general in Costello's original proof \cite[Lemma 6.0.1]{Costello} does not suffice -- one must assume $I = \Supp D \subseteq C$ general as well to guarantee $\dim \Gamma(\OO(D')) = 1$ for \textit{any} divisor $0 \leq D' < D$.  Otherwise, take a generic genus two curve with $g^1_2$ mapping $f : C \to \PP^1$ and let $D = 2p + q$ with $p$ a Weierstrass point. If $D' = 2p$, $\dim \Gamma(\OO(D')) = 2$. 

\end{remark}

\begin{remark}
The pure degree $e$ is different from that computed by Costello:
\[e' = \dfrac{k!(g!)^{\# J} ((g-1)!)^k}{2^k m(\infty)}.\]
Consider the substack $\tilde{\frak M}_\Xi' \subseteq \tilde{\frak M}_\Xi(BS_d)$ ordering points of equal ramification separately by fixing $\tau$ as in Remark \ref{rmk:twistedstabmapsfixramification}. The restriction of $p'$ to $\tilde{\frak M}_\Xi'$ is of pure degree $e'$ because the simple ramification points must have the greatest label in their fiber for each of the $k$-marked points. Ordering the other unramified points is a $S_{g-1}$-torsor. The other terms count degree of the gerbes and ordering of the other points identically to Theorem \ref{thm:costellopushforwarddegreecalculation}.

\end{remark}

The main computation of virtual fundamental classes in \cite[Lemma 8.0.2]{Costello} thus applies to our above modifications: 
\[q_* \vfc{\MsSymG} = e \cdot \vfc{\overline{M}_{g, \ell-A}(X)}.\]

\begin{remark}
We employed the technology of \cite{twistedstablemaps} for convenience and brevity, but the same results may be achieved with Costello's original technology of weighted graphs. The data of a partial stabilization can be encoded on the level of graphs, and the stabilization of a map $C \to X$ can be reconstructed from the weighting of components by their curve classes in $X$. 
\end{remark}

\section{Applications of the pushforward formula}

This section addresses myriad articles which use Costello's Formula. The papers \cite{vakilgwthy}, \cite{pointedadmgcoversandgcfts}, \cite{genfnsforHHints}, \cite{obstnthiesandvfcs} reference but don't use Costello's Formula. The paper \cite{htpytypeofcobcatsofsurfs} uses other results from Costello's paper and not his formula, while \cite{gwthyofbddgerbesoverschs} uses it only for motivation.

The use of Costello's Formula in \cite{compthmsforgwisofsmpairs} will be addressed alongside other simplifications in forthcoming work by Sam Molcho, Rahul Pandharipande, and the authors. Similar techniques also apply to \cite{polyfamtautclasses} and \cite{stablemapstoratlcurves}, although both are subsumed by the suitably proper diagram in  \cite[\S 5.5]{logcptificationabeljacobi}.

\subsection{An algebraic proof of the hyperplane property of the genus-one GW-invariants of quintics}

The application of a ``cosection-localized version'' of Costello's Formula proposed in equation (1.4) is spelled out at the end of Section 2. There is a cartesian diagram
\[\begin{tikzcd}
D(\widetilde{\sigma}) \ar[r] \ar[d] \pb       &\widetilde{Y} \ar[r, "\widetilde{f}"] \ar[d, "p"] \pb      &\widetilde{X} \ar[r] \ar[d, "q"] \pb      &\widetilde{\mathcal{D}}  \ar[r] \ar[d] \pb     &\widetilde{\mathcal{M}}^w \ar[d]      \\
D(\sigma) \ar[r]       &Y \ar[r, "f"]      &X \ar[r]      &\mathcal{D}  \ar[r]       &\mathcal{M}^w,
\end{tikzcd}\]
where the map $\widetilde{\mathcal{M}}^w \to \mathcal{M}^w$ is a blowup and the obstruction theories of $Y, X$ relative to $\mathcal{D}$ pull back to those of $\widetilde{Y}, \widetilde{X}$ relative to $\widetilde{\mathcal{D}}$. Properness and birationality of the blowup lets one apply Costello's pushforward formula to show $q_\ast [\widetilde{X}]^{vir} = [X]^{vir}$.

One of two proofs they offer of Proposition 2.3 claims that $\widetilde{f}_*[\widetilde{Y}]_{loc}^{vir} = [\widetilde{X}]^{vir}$ and $f_*[Y]^{vir}_{loc} = [X]^{vir}$. From this claim and the valid application of Costello's pushforward formula, we see the Proposition is correct:
\[\deg [\widetilde{Y}]^{vir}_{loc} = \deg[Y]^{vir}_{loc}.\]

\subsection{Virtual pull-backs}

The final result \cite[Proposition 5.29]{Manolache} assumes the morphism is projective. This assumption was missing on the first version of the paper, however.

\subsection{Log Gromov-Witten theory with expansions}

The paper  only uses Costello's Formula to address pushforwards along logarithmic modifications that are pulled back from the target of the perfect obstruction theory \cite[Proposition 3.6.1]{loggwwithexps}. The definition of logarithmic modification includes a properness assumption \cite[\S 3.2]{loggwwithexps}.

\subsection{The cohomological crepant resolution conjecture for the Hilbert-Chow morphisms}

This paper uses Costello's Formula for a cartesian square
\[\begin{tikzcd}
\Msprels(\widehat{V}_1 \times_T \widehat{V}_2) \ar[r] \ar[d] \pb     &\Msprels(\widehat{V}_1) \times_T \Msprels(\widehat{V}_2) \ar[d]\\
T \times \mathcal{D}(d_1, d_2) \ar[r]         &T \times \mathcal{M}_{0, 3}(d_1) \times \mathcal{M}_{0, 3}(d_2)
\end{tikzcd}\]
in the proof of \cite[Lemma 5.5]{cohomcrepresconjhilbchow}. Immediately before, \cite[Lemma 5.4]{cohomcrepresconjhilbchow} shows that the lower horizontal arrow without $T$, $\mathcal{D}(d_1, d_2) \to \mathcal{M}_{0, 3}(d_1) \times \mathcal{M}_{0, 3}(d_2)$, is proper and birational.

\subsection{Gromov-Witten theory of \'etale gerbes, I: root gerbes}

Costello's result is used in \cite[Theorem 4.3]{gwthyofrootgerbes}. The morphism $Y^g_{0, n, \beta} \to \mathfrak M_{0, n, \beta}$ is an example of the Matsuki-Olsson construction, which is finite \cite[Theorem 4.1]{olssonmatsukikawamataviehweg}.

\subsection{The degeneration formula for logarithmic expanded degenerations}

The map $\mathfrak{T}^{u, spl}_0 \to \mathfrak{T}^u_0$ is observed to be a normalization in \cite[\S 7.2]{degfmlaforlogexpdegs}, subject to Remark \ref{rmk:normnisproper}. This proper map is the base of a diagram 
\[\begin{tikzcd}
K_\mathfrak{Q} \ar[r] \ar[d] \pb     &\mathfrak{Q}  \ar[r] \ar[d] \pb         &\mathfrak{Q}^{ext} \ar[r] \ar[d] \pb        &\mathfrak{T}^{u, spl}_0 \ar[d]        \\
K \ar[r]     &\mathfrak{T}^{etw}_{0}  \ar[r]       &\mathfrak{T}^{tw} \ar[r]        &\mathfrak{T}^u_{0}         
\end{tikzcd}\]
to which Chen applies Costello's Formula.

\subsection{Virtual classes of Artin stacks}

The result \cite[Theorem 5.2]{virtclassesartinstacks} includes a properness assumption.

\subsection{Virtual normalization and virtual fundamental classes}

Theorem 1 applies Costello's pushforward formula to a pullback along the map 
\[\widehat{Log} \subseteq Log^1 \to Log.\]
This pullback entails saturation of log structures, which is finite \cite[Proposition III.2.1.5 (2)]{ogusloggeom}.

\subsection{Orbifold techniques in degeneration formulas}

Costello's formula is used several times in \cite{orbifoldtechsindegfmlas}. 

\textit{Theorem 4.7}: the maps $\mathfrak{T}^{\mathfrak{r}'}_0 \to \mathfrak{T}^{\mathfrak{r}}_0$, $\mathscr{T}^{\mathfrak{r}'} \to \mathscr{T}^{\mathfrak{r}}$ along the bottom of the two squares written as one in Proposition 4.4.2 (2) are proper by Proposition 2.12.

\textit{Lemma 4.16}: the proof applies Costello's formula to the diagram
\[\begin{tikzcd}
K_\Xi \ar[r] \ar[d] \pb       &\prod K_{\Gamma_\nu}  \ar[d]    \\
    \mathscr{T}' \ar[r]        &(\mathscr{T})^h.
\end{tikzcd}\]
We need to argue $\mathscr{T}' \to (\mathscr{T}^{tw})^h$ is proper. We don't know how to define the contraction maps unless the rooting order is the same at each node, but this suffices.

Recall the description of $\Tc$ given in \cite[\S 5.2]{logcptificationabeljacobi}. The strict-\'etale topology on fs log schemes supports a sheaf of groups:
\[\Gt (S) := \Gamma(S, \overline{M}_S^{gp}).\]
An (oriented) \textit{tropical line} (\textit{bundle}) is a torsor in the strict-\'etale topology for $\Gt$. A map $S \to \Tc$ is a tropical line $P$ together with a subsheaf of sets $Q \subseteq P$ for which there locally exists a nonempty chain $\{\gamma_1 \leq \cdots \leq \gamma_n\} \subseteq \Gamma(S, \overline{M}_S^{gp})$ such that $Q$ is the subsheaf of sections of $P$ locally comparable to all the $\gamma_i$.

\begin{lemma}
The map $\mathscr{T}' \to (\mathscr{T}^{tw})^h$ is a log blowup, hence proper. 
\end{lemma}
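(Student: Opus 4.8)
The plan is to recognize the morphism tropically and exhibit it as a subdivision, which is exactly the combinatorial shadow of a log blowup. Recall from the description of $\Tc$ recalled above (and in \cite{logcptificationabeljacobi}) that a map $S \to \Tc$ records a tropical line $P$ together with a subsheaf $Q \subseteq P$ cut out by a chain $\gamma_1 \leq \cdots \leq \gamma_n$ in $\Gamma(S, \overline{M}_S^{gp})$. The passage from $(\mathscr{T}^{tw})^h$ to $\mathscr{T}'$ imposes that the rooting orders at the various nodes be comparable --- equivalently, that the corresponding elements of the characteristic monoid be totally ordered, which is precisely the condition producing the chain $\gamma_1 \leq \cdots \leq \gamma_n$. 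First I would make this precise at the level of tropicalizations (cone stacks, or equivalently Artin fans): the tropicalization of $(\mathscr{T}^{tw})^h$ carries cones on which the rooting parameters $\gamma_i$ range with no comparability constraint, while the tropicalization of $\mathscr{T}'$ refines each such cone by the wall arrangement $\{\gamma_i = \gamma_j\}$ into the chambers on which the $\gamma_i$ are totally ordered.

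Next I would verify that this refinement is an honest subdivision: it has the same support as the original cone, uses only the integral structure already present (the walls $\gamma_i = \gamma_j$ are rational), and is glued consistently across cones so as to define a subdivision of the entire cone stack. Over the open locus where the $\gamma_i$ are already comparable --- in particular over the locus of trivial log structure --- the map is an isomorphism, so the morphism is log \'etale and birational.

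Then I would invoke the standard correspondence between subdivisions of the tropicalization and log blowups: a proper, log \'etale, monomial morphism of log algebraic stacks whose tropicalization is a subdivision is a log blowup, i.e.\ it arises as the blowup along a coherent sheaf of monomial ideals --- here the ideal recording the differences $\gamma_i - \gamma_j$. This exhibits $\mathscr{T}' \to (\mathscr{T}^{tw})^h$ as a log blowup, and log blowups are projective, hence proper, which is the conclusion we need.

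The main obstacle is the bookkeeping in the first two steps: showing that ``the rooting order is the same at each node'' translates exactly into the subdivision by the walls $\gamma_i = \gamma_j$, that this subdivision is defined coherently over the whole (twisted) cone stack rather than merely cone-by-cone, and that the twisting recorded by the superscript $tw$ is absorbed by working with the appropriate fractional, root-stack lattice, so that the refinement is a genuine log blowup rather than only a log modification. Once the subdivision is in hand, properness is formal.
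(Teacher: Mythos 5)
Your tropical picture is the right one, and it is essentially the paper's: the map to $(\mathscr{T}^{tw})^h$ imposes comparability (total ordering) of the rooting parameters, and this is realized by blowing up monomial ideals generated by pairs of these parameters. But as written, your concluding step is circular: you invoke the criterion ``a \emph{proper}, log \'etale, monomial morphism whose tropicalization is a subdivision is a log blowup'' in order to conclude the map is a log blowup, and then deduce properness from its being a log blowup. Properness is exactly what the lemma is needed for, so it cannot appear as a hypothesis of the criterion you apply. What you need instead is the direct identification: the subdivision by the walls $\{\gamma_i = \gamma_j\}$ \emph{is} the subdivision induced by blowing up the ideals generated by the pairs, so the map is a log blowup by construction, with no properness input. (Alternatively, the weaker standard fact that any morphism tropicalizing to a subdivision --- a log modification --- is proper would give properness, but not the ``log blowup'' claim, and you would still have to verify that the tropicalization really is that subdivision.)

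That verification, which you defer as ``bookkeeping,'' is in fact the entire content of the paper's proof, and it contains a wrinkle your sketch does not address: the $\gamma_i^j$ are sections of a $\Gt$-torsor $P$, not of $\overline{M}_S$, so ``the ideal recording the differences $\gamma_i - \gamma_j$'' is not yet a coherent sheaf of monomial ideals. The paper trivializes each torsor by subtracting $\gamma_1^j$, producing honest sections $s_i^j = \gamma_i^j - \gamma_1^j \in \overline{M}_S$; this uses crucially that the first element $\gamma_1'$ of the universal chain (corresponding to the unexpanded target $X$) is never forgotten, so that it can serve as a common origin. The paper then forms $B$ as the fs product of the log blowups at the ideals $(s_i^j, s_{i'}^{j'})$, each pulled back from $\mathscr{A}^2$, and checks both directions of the universal property: a map $T \to S$ factors (uniquely) through $B$ if and only if the $s_i^j|_T$ are totally ordered; and the fs pullback $R$ of $\mathscr{T}'$ factors through $B$ with $R \to B$ an isomorphism, because a total ordering of the $s_i^j$ determines a unique common refinement, i.e.\ a unique point of $\mathscr{T}'$. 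Without this two-way equivalence, your claim that $\mathscr{T}'$ tropicalizes to precisely the chamber decomposition is asserted rather than proved; with it, the lemma follows exactly as in the paper.
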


\begin{proof}

For any map $S \to (\mathscr{T})^h$, take the fs pullback
\[\begin{tikzcd}
R \ar[r] \ar[d] \lpb       &S \ar[d]      \\
\mathscr{T}' \ar[r]        &(\mathscr{T})^h.
\end{tikzcd}\]
By strict-\'etale localization, assume each map $S \to \mathscr{T}$ corresponds to a $\mathbb{G}_m^{trop}$-torsor $P$ which is subdivided by sections $\gamma_1^j \leq \cdots \leq \gamma_{n_j}^j$. Use $\gamma_1^j$ to trivialize this torsor: 
\[P \simeq \mathbb{G}_m^{trop} \quad \quad \quad p \mapsto p-\gamma_1^j.\]
Write 
\[s_i^j = \gamma_i^j - \gamma_1^j \in \overline{M}_S\] 
for the image of $\gamma_i^j$ under this isomorphism.

In the present language, the map $\mathscr{T}' \to (\mathscr{T})^h$ corresponds to a subdivided tropical line $\gamma_1' \leq \cdots \leq \gamma_n'$ which induces all the others by forgetting some elements $\gamma_i'$. The first $\gamma_1'$ is never forgotten, since it corresponds to the unexpanded target $X$ in the expansion of $(X, D)$. Thus the element $\gamma_1'$ maps to 0 under our trivializations above. Write $s_i' := \gamma_i' - \gamma_1'$ similarly.

Take the fs product $B$ of all log blowups of $S$ at ideals given by pairs $(s_i^j, s_{i'}^{j'})$ for $1 \leq j \leq h$. Each of these blowups may be fs pulled back from the ideal of universal elements of $\overline{M}_{\mathscr{A}^2}$ on $\mathscr{A}^2$. A map $T \to S$ factors through $B$ (and uniquely) if and only if the set $(s_i^j|_T) \subseteq \overline{M}_T$ is totally ordered.

Since the $\gamma_i^j$'s all arise by forgetting parts of the subdivision $\gamma_1' \leq \cdots \leq \gamma_n'$, they are totally ordered on $R$. This means $R \to S$ factors through $B$. Observe also that the fs product $R \times_S B \to B$ is an isomorphism -- if the $s_i^j$'s are totally ordered, their sums with $\gamma_1$ yield a unique subdivision. Thus $R \to B$ is an isomorphism.

\end{proof}

\textit{Lemma 5.11}: the formula is applied to the cartesian diagram 
\[\begin{tikzcd}
K_\mathfrak{Q} \ar[r] \ar[d] \pb     &K  \ar[d]     \\
\mathfrak{Q} \ar[r] \ar[d] \pb        &\mathfrak{T}^{tw}_0  \ar[d]       \\
\mathfrak{T}^{u, spl}_0 \ar[r]         &\mathfrak{T}^u_0
\end{tikzcd}\]
beginning \S 5.4. They observe that the bottom horizontal arrow is a normalization of locally finite type stacks, hence subject to Remark \ref{rmk:normnisproper}. 

\textit{Lemma 5.12}: The bottom map in the diagram 
\[\begin{tikzcd}
K^{spl}_r \ar[r] \ar[d] \pb       &K_{\mathfrak{Q}_r} \ar[d]         \\
\mathfrak{T}^{r, spl}_0   \ar[r]       &\mathfrak{Q}_r
\end{tikzcd}\]
is the reduced induced closed substack, hence a proper map.

\textit{Lemma 5.15}: The map is a gerbe banded by $\mu_r$, which is proper.

\subsection{Birational invariance in log Gromov-Witten theory}

The paper \cite{biratlinvceinloggwthy} uses Costello's Pushforward Formula on the cartesian square $(1)$ in \cite[\S 1.6]{biratlinvceinloggwthy}:
\[\begin{tikzcd}
\Ms(Y) \ar[r] \ar[d] \pb      &\Ms(X) \ar[d]         \\
\Mprel'(\mathcal{Y}\to \mathcal{X}) \ar[r, "\Mprel(h)"]         &\Mprel(\mathcal{X}).
\end{tikzcd}\]

\begin{lemma}
The map $\Mprel(h)$ is proper. 
\end{lemma}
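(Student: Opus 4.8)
The plan is to establish properness of $\Mprel(h)$ by the valuative criterion, after reducing the problem to the geometry of the morphism $h$ itself. Recall that in the setting of \cite{biratlinvceinloggwthy} the map $h \colon \mathcal Y \to \mathcal X$ is a \emph{log modification}: it is proper, log étale, and an isomorphism over the open substack where the log structure of $\mathcal X$ is trivial. Strict-étale locally on $\mathcal X$, such an $h$ is cut out by a subdivision of the tropicalization $\Sigma(\mathcal X)$, and lifting a log map $C \to \mathcal X$ to $\mathcal Y$ is governed entirely by the combinatorics of pulling this subdivision back along the tropicalized map $\Sigma(C) \to \Sigma(\mathcal X)$. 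The morphism $\Mprel(h)$ forgets the lift to $\mathcal Y$ (and the attendant modification of the source curve recorded by the prime), remembering only the log map to $\mathcal X$; so properness of $\Mprel(h)$ is precisely the assertion that this lifting problem has the expected existence-and-uniqueness behaviour over traits.

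I would first handle finite type and separatedness, which are the cheaper inputs. For a fixed log map $C \to \mathcal X$, the refinements of $\Sigma(C)$ induced by the pullback of the subdivision defining $h$ form a bounded family, and properness of $h$ bounds the resulting lifts $C' \to \mathcal Y$; this gives finite type. Separatedness is the uniqueness half of the valuative criterion, and it follows from $h$ being log étale: any two lifts of a given log map agree over the dense locus where the log structure is trivial, because there $h$ is an isomorphism, and log étaleness propagates this agreement to the whole source. Equivalently, the lift against a subdivision is unique once it exists.

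The substance is the existence half of the valuative criterion. Let $R$ be a discrete valuation ring with fraction field $K$, let $S = \Spec R$, and suppose we are given a family $C/S \to \mathcal X$ classified by $S \to \Mprel(\mathcal X)$ together with a lift over the generic point, i.e.\ a point of $\Mprel'(\mathcal Y \to \mathcal X)$ over $\Spec K$. Pulling the subdivision of $\Sigma(\mathcal X)$ back along $\Sigma(C/S) \to \Sigma(\mathcal X)$ produces a subdivision of $\Sigma(C/S)$ that is defined over all of $S$ and restricts on the generic fiber to the subdivision underlying the given lift. Realizing this subdivision as a log modification $C' \to C$ of the whole family then extends the source datum over $S$, after at most a finite extension of $R$; the corresponding lift $C' \to \mathcal Y$ is obtained over the special fiber by applying the valuative criterion for the proper morphism $h$, with log étaleness ensuring the extension is unique and compatible with the log structures. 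This produces the required $S$-point of $\Mprel'(\mathcal Y \to \mathcal X)$.

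The hard part will be the existence step across the special fiber: one must check that the subdivision pulled back over all of $S$ yields a \emph{flat} family of prestable log curves $C' \to S$, and that the generically defined map to $\mathcal Y$ genuinely extends over the central fiber $C'_s$ after a single finite base change, rather than forcing an unbounded tower of modifications. This is exactly where properness of $h$ (not merely its log étaleness) is essential, and where the combinatorial claim---that pulling a subdivision back along the tropicalization of a family over a trait canonically extends the generic subdivision with no further choices---must be verified carefully.
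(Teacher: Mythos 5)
Your proposal outlines the direct, tropical route (essentially the strategy of Abramovich--Wise), but it has a genuine gap: the existence half of the valuative criterion --- which you yourself flag as ``the hard part'' to ``be verified carefully'' --- is deferred rather than proved, and it is not a detail but the entire content of the lemma. Concretely, the pullback modification $C \times_{\mathcal{X}} \mathcal{Y}$ of the family over the trait $S = \Spec R$ is in general neither flat over $S$ nor a family of nodal curves, so producing a prestable log curve $C' \to S$ dominating it and restricting to the given generic-fiber datum requires a stable-reduction-type argument (finite base change of $R$, further subdivision, contraction of unstable components), none of which is supplied. Moreover, the mechanism you propose for crossing the special fiber --- ``applying the valuative criterion for the proper morphism $h$'' --- is not applicable as stated: valuative criteria extend maps from traits, whereas here one must extend a morphism defined on the two-dimensional total space of a curve fibration; in fact, once $C'$ maps to $C \times_{\mathcal{X}} \mathcal{Y}$, the lift to $\mathcal{Y}$ is automatic by composition, so this is not where properness of $h$ does its work. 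Finally, your interpolation claim conflates the source of the generic lift with the pullback subdivision: an object of $\Mprel'(\mathcal{Y}\to\mathcal{X})$ over $\Spec K$ is a \emph{stable map} to the pullback modification of $C_K$, possibly contracting components and not an isomorphism onto it, so ``the subdivision underlying the given lift'' is not well defined, and one must carry both the subdivision and the stable map to it across the degeneration.

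For comparison, the paper avoids all of this with a structural argument. Let $\overline{\mathscr{C}}$ be the universal curve over $\Mprel(\mathcal{X})$ and set $P := \overline{\mathscr{C}} \times_{\mathcal{X}} \mathcal{Y}$. Since $h$ is proper and $\overline{\mathscr{C}} \to \Mprel(\mathcal{X})$ is proper, $P$ is proper over $\Mprel(\mathcal{X})$. A square in the moduli problem is exactly a stable map $C \to P$ relative to $\Mprel(\mathcal{X})$ whose composite $C \to P \to \overline{\mathscr{C}}$ is a partial stabilization, so $\Mprel(h)$ factors as the inclusion of components of $\Ms(P/\Mprel(\mathcal{X}))$ cut out by the partial-stabilization condition --- open and closed by Lemma~\ref{lem:partstabnclopen} --- followed by the projection to $\Mprel(\mathcal{X})$, which is proper because stable maps to a proper relative target form a proper stack. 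This outsources exactly your hard step to the standard properness theorem for stable maps; completing your direct tropical argument would amount to re-proving that theorem in the logarithmic setting.
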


\begin{proof}

The map sends a square
\[\begin{tikzcd}
C  \ar[r] \ar[d]     &\mathcal{Y} \ar[d]        \\
\overline{C} \ar[r]        &\mathcal{X}
\end{tikzcd}\]
to the bottom horizontal arrow. Write $\overline{\mathscr{C}}$ for the universal curve on $\Mprel(\mathcal{X})$ and 
\[P := \overline{\mathscr{C}} \times_\mathcal{X} \mathcal{Y}\]
for the pullback. Then $\Mprel(h)$ factors through the inclusion of components of $\Ms(P/\Mprel(\mathcal{X}))$ on which the universal map $C \to P \to \overline{\mathscr{C}}$ is a partial stabilization and Lemma \ref{lem:partstabnclopen} concludes. 

\end{proof}

\subsection{Relative and Orbifold Gromov-Witten Invariants}

In \cite[Diagram 2.3.1]{relandorbgwinvts}, we see another application of Costello's pushforward formula. This square is a special case of a more general class of diagrams investigated in Section 7.3:
\[\begin{tikzcd}
\Ms_\Gamma^{tr}(\mathscr{X}^{rel}/\mathscr{T}) \ar[r, "\phi_{\mathscr{X}}"] \ar[d] \pb      &\Ms_\Gamma(\mathscr{X}) \ar[d]       \\
\Mprel^{rel}_{0,n}(\mathscr{A}, B\mathbb{G}_m)' \ar[r, "\phi_{\mathscr{A}}"]         &\Mprel_{0, n}(\mathscr{A})'.
\end{tikzcd}\]

The stack $\Mprel_{0, n}(\mathscr{A})'$ is an open substack of $\Mprel_{0,n}(\mathscr{A})$.

\begin{lemma}\label{lem:relandorbgwischeckproperviacostellomethod}
The map 
\[\Mprel^{rel}_{0,n}(\mathscr{A}, B\mathbb{G}_m)' \to \Mprel_{0, n}(\mathscr{A})'\]
is proper. 
\end{lemma}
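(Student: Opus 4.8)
The plan is to verify properness through the template already used for the Birational-invariance lemma and Corollary~\ref{cor:costellopisproper}, falling back on the valuative criterion where needed. The map in question forgets the relative (expanded) structure of a stable map to the pair $(\mathscr{A}, B\mathbb{G}_m)$, remembering only the underlying prestable map of the source curve to $\mathscr{A}$, where $B\mathbb{G}_m \hookrightarrow \mathscr{A}$ is the divisor cut out by the origin. Since both $\Mprel^{rel}_{0,n}(\mathscr{A}, B\mathbb{G}_m)'$ and $\Mprel_{0,n}(\mathscr{A})'$ are locally of finite type, properness reduces to a separatedness-and-existence statement for limits, and the forgetful shape of the map is exactly the kind handled elsewhere in this section.

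Concretely, I would factor the map through a space of stable maps to a universal target that is proper over the base. Over $\Mprel_{0,n}(\mathscr{A})'$ sits the universal curve with its universal map to $\mathscr{A}$; the expansions of $(\mathscr{A}, B\mathbb{G}_m)$ with fixed discrete data form a bounded, proper family, and a relative stable map is recovered from a stable map to the corresponding expanded target together with the requirement that its contraction to $\mathscr{A}$ recover the given prestable map. The condition that this contraction be a partial stabilization is open and closed by Lemma~\ref{lem:open-and-closed}, so the forgetful map is proper provided the space of stable maps to the (proper-over-base) universal expansion is proper, which it is since stable maps to a proper-over-base target form a proper stack. This is the same packaging that concluded the Birational-invariance lemma, where $P = \overline{\mathscr{C}} \times_{\mathcal X} \mathcal Y$ played the role of the proper-over-base target.

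The main obstacle I anticipate is establishing that the universal expanded target really is proper over $\Mprel_{0,n}(\mathscr{A})'$ and that the relative-stability conditions — predeformability and minimality of the expansion — are precisely cut out by the partial-stabilization and discrete-data constraints, with no spurious extra components of the ambient stable-map space surviving. Phrased valuatively, the crux is uniqueness of the limiting expansion along a discrete valuation ring, i.e.\ the separatedness half; this is the content of Jun Li's properness theorem for relative stable maps, or of its logarithmic/orbifold analogue exploited in Section~7.3 of the cited work, whereas existence of the limit is the easier half. I would therefore isolate this uniqueness statement as the one point needing real care and treat the surrounding finite-type and open–closed reductions as routine, appealing to Lemma~\ref{lem:open-and-closed} to convert the set-theoretic closedness of the relevant locus into the properness of $p'$.
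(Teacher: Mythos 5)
Your proposal takes essentially the same route as the paper: the paper likewise factors the map through a space of (relative) stable maps to the universal expansion pulled back over the universal curve --- a target proper over the base --- and identifies the relative space as the open-and-closed locus where the contraction $C \to \overline{C}$ is a partial stabilization, invoking Lemma~\ref{lem:partstabnclopen} exactly as in the birational-invariance lemma, with the separatedness/existence content absorbed into the properness of that ambient stable-maps space. The only cosmetic difference is that the paper first reduces to the unprimed map $\Mprel^{rel}_{0,n}(\mathscr{A}, B\mathbb{G}_m)^* \to \Mprel_{0,n}(\mathscr{A})$, observing that the primed map is a base change of it.
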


\begin{proof}

This map is pulled back from the map 
\[\Mprel^{rel}_{0,n}(\mathscr{A}, B\mathbb{G}_m)^* \to \Mprel_{0, n}(\mathscr{A}).\]
This map sends a square 
\[\begin{tikzcd}
C \ar[r] \ar[d]      &\widetilde{\mathscr{A}}_r \ar[d]        \\
\overline{C}  \ar[r]      &\mathscr{A}_r \times S
\end{tikzcd}\]
to the lower horizontal arrow. We again employ Lemma \ref{lem:partstabnclopen} by describing this map as the locus among relative moduli of stable curves where a particular morphism is a partial stabilization. 

\end{proof}

The same techniques handle the square \cite[7.1.2]{relandorbgwinvts}:
\[\begin{tikzcd}
\Ms^{rel}_{g=0}(X_r, D_r) \ar[r] \ar[d] \pb         &\Ms_{g=0}^{orb}(X_r) \ar[d]       \\
\Mprel_{g=0}^{rel}(\mathscr{A}_r, \mathscr{D}_r) \ar[r]          &\Mprel_{g=0}^{orb}(\mathscr{A}_r). 
\end{tikzcd}\]

We still must address the map $\phi_\mathscr{A}$ in
\[\begin{tikzcd}
\Ms^{rel}(X_r, D_r) \ar[r, "\phi_{\mathscr{X}}"] \ar[d] \pb      &\Ms^{rel}(X,D) \ar[d]       \\
\Mprel^{rel}_{0,n}(\mathscr{A}_r, \mathscr{D}_r) \ar[r, "\phi_{\mathscr{A}}"]         &\Mprel^{rel}_{0, n}(\mathscr{A}, \mathscr{D}).
\end{tikzcd}\]

\begin{remark}
No stabilization occurs in $\phi_\mathscr{X}$. 

\end{remark}

\begin{lemma}
The map 
\[\phi_{\mathscr{A}} : \Mprel^{rel}_{0, n}(\mathscr{A}_r, \mathscr{D}_r) \to \Mprel^{rel}_{0, n}(\mathscr{A}, \mathscr{D})\]
is of pure degree 1. 
\end{lemma}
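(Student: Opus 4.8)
The plan is to verify the two ingredients of pure degree~$1$ separately: that $\phi_{\mathscr A}$ has generic degree~$1$ in the sense of Appendix~\ref{sec:degofgenfin}, and that it is pure in the sense of Definition~\ref{def:purity} along the vertical map $\Ms^{rel}(X,D) \to \Mprel^{rel}_{0,n}(\mathscr A,\mathscr D)$ of the cartesian square. Throughout I would exploit the preceding remark that no stabilization intervenes in $\phi_{\mathscr X}$: consequently $\phi_{\mathscr A}$ alters only the target of the relative map, replacing the $r$-th rooted pair $(\mathscr A_r,\mathscr D_r)$ by $(\mathscr A,\mathscr D)$ while leaving the source curve untouched, so none of the partial-stabilization subtleties of Lemma~\ref{lem:partstabnclopen} arise here.

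For the degree, I would first isolate the open locus of $\Mprel^{rel}_{0,n}(\mathscr A,\mathscr D)$ consisting of relative prestable maps with smooth source and no expansion bubbles, carrying the prescribed contact data along $\mathscr D$. Over this locus the rooting of the Artin-fan target is equivalent to the fixed contact-order data of the relative map, so the lift along $\phi_{\mathscr A}$ is uniquely determined and $\phi_{\mathscr A}$ restricts to an isomorphism there. Since this locus is dense and of the expected dimension in each component, this simultaneously gives the equality $\dim\Mprel^{rel}_{0,n}(\mathscr A_r,\mathscr D_r)=\dim\Mprel^{rel}_{0,n}(\mathscr A,\mathscr D)$ of pure dimensions and shows the generic fibers are singletons, i.e.\ that $\phi_{\mathscr A}$ has degree~$1$.

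The substantive point is purity. Following Definition~\ref{def:purity}, I would take $S=\Spec R$ the spectrum of a discrete valuation ring with closed point $s$, together with a morphism $S\to\Mprel^{rel}_{0,n}(\mathscr A,\mathscr D)$ whose special point $s$ lands in the image of $\Ms^{rel}(X,D)$, and show that the base change of $\phi_{\mathscr A}$ to $S$ is proper. The hypothesis on $s$ supplies a relative \emph{stable} map over the closed fiber, so the valuative criterion reduces to extending, uniquely up to the $2$-isomorphisms recorded in Remark~\ref{rmk:stabnfunctorial}, a family of rooted relative maps given over the generic point to one over all of $S$; existence and uniqueness would be imported from stable reduction for relative stable maps to the expanded rooted pair. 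As in the proof of Lemma~\ref{lem:relandorbgwischeckproperviacostellomethod}, I would realize $\phi_{\mathscr A}$ as a base change of a universal map over a fixed base and then invoke the stability of purity under base change in the target (Remark~\ref{rmk:puredegprops}) to reduce to that universal case.

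I expect the main obstacle to be precisely this last step, because the prestable relative moduli are neither separated nor proper and one therefore cannot simply cite the properness of a stable-map space. The crux is to show that the hypothesis ``the closed point maps into the image of $X$'' confines the one-parameter family to the proper locus: concretely, one must control how the rooting orders and the expansion bubbles of the Artin-fan target can degenerate in a family over $R$, and match that degeneration against the unique stable limit furnished over $s$. Making this matching precise, rather than the dimension or degree bookkeeping of the second paragraph, is where the real work lies.
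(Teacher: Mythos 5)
Your proposal stops exactly where the proof has to start: the purity half of ``pure degree $1$'' is described but never established, and you concede as much in your final paragraph. The tools you name would not close it. ``Stable reduction for relative stable maps'' is not available here, because the objects of $\Mprel^{rel}_{0,n}(\mathscr{A}_r,\mathscr{D}_r)$ are \emph{prestable} relative maps to a rooted Artin fan: these moduli are neither separated nor proper, which is precisely what is in question, and the hypothesis that the special point lies in the image of $\Ms^{rel}(X,D)$ only produces a stable object over the closed fiber, not an extension-and-uniqueness theorem for the family over the whole trait. Your other suggestion, to argue ``as in the proof of Lemma~\ref{lem:relandorbgwischeckproperviacostellomethod},'' also does not transfer: that proof works by exhibiting its map as a partial-stabilization locus inside a space of relative stable maps and invoking Lemma~\ref{lem:partstabnclopen}, but for $\phi_{\mathscr{A}}$ no stabilization occurs (as you yourself note), so there is no partial-stabilization locus to cut out, and Remark~\ref{rmk:puredegprops}'s base-change stability gives you nothing until you have produced a proper (or pure) universal map to pull back from --- which is the whole problem.

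The missing idea is a compactification, not a valuative analysis, and it is ironic that you dismiss it: the step you rule out (``one cannot simply cite the properness of a stable-map space'') is exactly what the paper does, after choosing the right target. Let $\mathfrak{D}$ be the universal curve over $\Mprel^{rel}_{0,n}(\mathscr{A},\mathscr{D})$, mapping to the universal expansion $\widetilde{\mathscr{A}}$, and let $u : \mathfrak{D} \times_{\widetilde{\mathscr{A}}} \widetilde{\mathscr{A}_r} \to \Mprel^{rel}_{0,n}(\mathscr{A},\mathscr{D})$ be its pullback along the map of universal expansions; this is a \emph{proper} family over the base, so the stack $\Ms(u)$ of relative stable maps to its fibers is proper over $\Mprel^{rel}_{0,n}(\mathscr{A},\mathscr{D})$. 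The source of $\phi_{\mathscr{A}}$ sits inside $\Ms(u)$ as the open locus of $S$-points where the induced map $C \to \mathfrak{D}|_S$ is an isomorphism; writing $\overline{\Mprel}$ for its closure, the map $\overline{\Mprel} \to \Mprel^{rel}_{0,n}(\mathscr{A},\mathscr{D})$ is proper and birational, and pure degree $1$ for $\phi_{\mathscr{A}}$ follows by restricting to this dense open. So rather than checking Definition~\ref{def:purity} on discrete valuation rings --- the part you correctly identify as the real work, and leave undone --- the paper embeds the non-proper stack into a proper one and lets properness of stable-map spaces carry the load. Your second paragraph (generic injectivity and the dimension count) is consistent with the birationality statement above, so the gap is confined to, but decisive for, the purity claim.
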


\begin{proof}

Write $u : \mathfrak D \times_{\widetilde{\mathscr{A}}} \widetilde{\mathscr{A}_r} \to \Mprel(\mathscr{A}, \mathscr{D})$ for the pullback of the universal curve along the map between universal expansions. The space $\Mprel^{rel}(\mathscr{A}_r, \mathscr{D}_r)$ lies inside the spaces of relative stable map $\Ms(u)$ as the locus with $S$-points where $C \to \mathfrak{D}|_S$ is an isomorphism; denote its closure by $\overline{\Mprel}$. Then $\overline{\Mprel} \to \Mprel(\mathscr{A}, \mathscr{D})$ is proper and birational, so restriction to the dense open $\Mprel(\mathscr{A}_r, \mathscr{D}_r)$ is pure degree~1. 

\end{proof}

\appendix

\section{Degree of a Generically Finite Morphism}\label{sec:degofgenfin}

The stacks project offers two definitions of generic finiteness. We assume our stacks are locally noetherian and elaborate on definition (1) of \cite[073A]{stacks-project}. 

\begin{definition}\label{def:repablegenfindeg}
Let $f : X \to Y$ be locally of finite type and $\eta \in Y$ be a maximal point. We say $f$ is \textit{generically finite at $\eta$} if the preimage $X \times_Y \eta$ is a finite, \textit{nonempty} set. Equivalently, there's an affine open $V \subseteq Y$ and finitely many $U_1, \dots, U_n$ such that $U_i \to V$ is finite and $\eta \in V$ and $X \times_Y \eta \subseteq \bigcup_n U_i$ \cite[02NW]{stacks-project}. 

Given that $f : X \to Y$ is generically finite at some maximal $\eta$, we say it is \textit{of degree $d$} at $\eta$ if \cite[02NY]{stacks-project}
\[d = \sum_{\xi \in f^{-1}(\eta)} \dim_{R(\eta)} \OO_{X, \xi}.\]

A morphism $f : X \to Y$ locally of finite type is said to be \textit{generically finite} or \textit{of degree $d$} if it is so at every maximal point $\eta \in Y$. 

A representable morphism $X \to Y$ locally of finite type between algebraic stacks is said to be generically finite or of degree $d$ (at a specific maximal point $\eta \in Y$ or for all) if the same is true for pulling back along some smooth cover $V \to Y$ by a scheme (with $\xi \in V$ mapping to $\eta$).

\end{definition}

\begin{remark}
Generically finite and degree $d$ both pull back along flat, quasicompact morphisms $Y' \to Y$ and may be checked after some (equivalently any) flat, quasicompact cover. This is because generalizations lift along flat, quasicompact morphisms, ensuring that maximal points map to each maximal point. 
\end{remark}

\begin{lemma}
Let $X \to \Spec k$ be a finite morphism from a DM stack to a field. Then $X$ admits a finite \'etale cover from a scheme.
\end{lemma}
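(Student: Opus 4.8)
The plan is to produce the desired cover directly from an \'etale atlas. Since $X$ is finite over $\Spec k$ it is quasi-compact and separated, so I can choose a representable \'etale surjection $V \to X$ with $V$ an affine scheme: take finitely many affine charts of any \'etale atlas and form their disjoint union. This $V$ is the candidate cover, and all that remains is to show that $V$ is finite over $k$ and that $V \to X$ is itself finite (it is already \'etale and surjective by construction).

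First I would check that $V$ is finite over $k$. The composite $V \to X \to \Spec k$ is locally of finite type and $V$ is quasi-compact, so $V$ is of finite type over $k$. Since $V \to X$ is \'etale and $X \to \Spec k$ is finite, both have relative dimension $0$, so $V \to \Spec k$ is quasi-finite. A quasi-finite, finite-type scheme over a field has only finitely many points, each the spectrum of a finite local $k$-algebra, so $V$ is finite over $k$.

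Next I would upgrade $V \to X$ from \'etale to finite \'etale. Because the atlas map is representable, finiteness can be tested through the criterion that a representable morphism which is proper and quasi-finite is finite. Quasi-finiteness is immediate from \'etaleness, so the point is properness, which I would obtain from the cancellation property for proper morphisms: $V \to \Spec k$ is finite, hence proper, and $X \to \Spec k$ is finite, hence separated, so the factoring morphism $V \to X$ is proper. Therefore $V \to X$ is proper and quasi-finite, hence finite, and being \'etale and surjective it is exactly a finite \'etale cover of $X$ by a scheme.

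The only delicate step is the properness of $V \to X$, where one invokes the cancellation lemma (if $V \to S$ is proper and $X \to S$ is separated, then $V \to X$ is proper) in the setting of algebraic stacks rather than schemes; everything else follows routinely from $X$ being zero-dimensional and finite over $k$.
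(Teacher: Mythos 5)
Your proof is correct and follows essentially the same route as the paper: take a quasi-compact \'etale atlas, observe that the composite to $\Spec k$ is quasi-finite and hence finite (being finite type over a field with zero-dimensional fibers), and then conclude that the atlas map itself is finite by cancellation against the separatedness of $X \to \Spec k$. The paper compresses the last step into one sentence, but the underlying argument is the one you spelled out.
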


\begin{proof}

Pick a finite type \'etale cover $P \to X$. Then $P \to X$ is locally quasifinite \cite[03WS]{stacks-project} and hence quasifinite \cite[01TD]{stacks-project}. The composite $P \to \Spec k$ is quasifinite, hence finite \cite[02NH]{stacks-project}. The map $P \to X$ is then finite. 

\end{proof}

\begin{definition}
A finite DM-type morphism $X \to \Spec k$ is of pure degree $d$ if, for some (equiv. any) finite \'etale cover $P \to X$ by a scheme, 
\[\dfrac{\deg (P/\Spec k)}{\deg (P/X)} = d.\]

A DM-type morphism $X \to Y$ of locally noetherian artin stacks is \textit{generically finite} if, for all maximal points $\eta \to Y$, the pullback 
\[X \times_Y \eta \to \eta\]
is finite. 
\end{definition}

\begin{remark}
The definition of \textit{degree $d$} for generically finite morphisms is determined by its properties: 
\begin{itemize}
    \item A composite $X \overset{f}{\to} Y \overset{g}{\to} Z$ for which $\deg f$, $\deg g$, $\deg g \circ f$ are well defined satisfies
    \[\deg (g \circ f) = \deg f \cdot \deg g.\]
    \item Given a pullback square 
    \[\begin{tikzcd}
    X' \ar[r] \ar[d, "f'"] \pb      &X \ar[d, "f"]      \\
    Y' \ar[r]      &Y
    \end{tikzcd}\]
    with $Y' \to Y$ flat and quasicompact, $f$ is generically finite (of degree $d$) if and only if $f'$ is. 
    \item Agreement with the notion for representable morphisms in Definition \ref{def:repablegenfindeg}. 
\end{itemize}
\end{remark}

We conclude with two folklore observations that we use in the body of the text.

\begin{remark}[``Stability is an open condition'']\label{rmk:stabilityisopen}
Suppose $f : X \to Y$ is locally finite type and $X, Y$ are algebraic stacks. There is a substack $U \subseteq X$ representing morphisms $T \to X$ such that $f|_T$ is DM type, and this substack is open. A map is DM type when the diagonal is unramified, which is an open condition by \cite[0475]{stacks-project}.

\end{remark}

This shows that the locus where a family of prestable maps is stable is open in the base.

\begin{remark}\label{rmk:normnisproper}
If $X$ is an algebraic stack locally of finite type, then its normalization $X^\nu \to X$ is finite. This is because normalizations are integral \cite[035Q]{stacks-project} and the map is locally of finite type \cite[01WJ]{stacks-project}. 
\end{remark}

\bibliographystyle{alpha}
\bibliography{costello}

\end{document}